\newtheorem{lemma}{Lemma}[section]
\newtheorem{prop}[lemma]{Proposition}
\newtheorem{thm}[lemma]{Theorem}
\newtheorem{corollary}[lemma]{Corollary}
\newtheorem{ques}[lemma]{Question}
\DeclareMathOperator{\Vol}{Vol}
\DeclareMathOperator{\Dist}{Dist}
\DeclareMathOperator{\Diam}{Diam}
\DeclareMathOperator{\Star}{Star} 
\newcommand{\RR}{\mathbb{R}}
\title{Volumes of balls in Riemannian manifolds and Uryson width}
\author{Larry Guth}
\begin{document}

\begin{abstract} If $(M^n, g)$ is a closed Riemannian manifold where every unit ball has volume at most $\epsilon_n$ (a sufficiently small constant), then the $(n-1)$-dimensional Uryson width of $(M^n, g)$ is at most 1.

\end{abstract}

\maketitle

If $X$ is a compact metric space, then we say that $X$ has Uryson q-width $\le W$ if there is a
q-dimensional simplicial complex $Y$ and a continuous map $\pi: X \rightarrow Y$ so that every fiber
$\pi^{-1}(y)$ has diameter $\le W$ in $X$.  In other words, if $x_1, x_2 \in X$ lie in the same fiber,
$\pi^{-1}(y)$, then $d_X(x_1, x_2) \le W$.  We denote the Uryson q-width of $X$ by $UW_q(X)$.
If $X$ is homeomorphic to a q-dimensional simplicial complex, then we can choose $Y = X$ and $\pi$ to be the identity,
and so $UW_q(X) = 0$.  Roughly speaking $UW_q(X)$ is small if $X$ is ``close to being q-dimensional".  

To get a first perspective, let $S^1(L)$ denote a circle of length $L$ and let $X$ be the product
$S^1(W) \times S^1(L)$, where $W < L$.  The Uryson 1-width of this space is $\sim W$.  If we let $\pi$ denote the projection from $S^1(W) \times S^1(L)$ to $S^1(L)$, then each fiber of $\pi$ has diameter $W/2$.  Therefore, $UW_1(X) \le W/2$.  On the other hand, the Uryson 1-width of $X$ is $\gtrsim W$ because of the Lebesgue covering lemma, which we discuss more later in the introduction.  

The Uryson width originally appeared in the early 1900's in connection with topological dimension theory.  In \cite{Gr1}, Gromov began to investigate the Uryson widths of Riemannian manifolds in connection with systolic geometry.  In \cite{Gr3}, he made a conjecture about Uryson widths and the volumes of balls, which we prove in this paper.  

\begin{thm} \label{main} There exists $\epsilon_n > 0$ so that the following holds.  
If $(M^n, g)$ is a closed Riemannian manifold, and if there is a radius $R$ so that
every ball of radius $R$ in $(M^n, g)$ has volume at most $\epsilon_n R^n$, then $UW_{n-1}(M^n, g) \le R$.
\end{thm}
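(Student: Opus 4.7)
By rescaling the metric by $R^{-2}$ I may assume $R = 1$, so the hypothesis becomes $\Vol B(x,1) \le \epsilon_n$ for every $x \in M$. The goal is a continuous $\pi\colon M \to Y$ to an at-most-$(n-1)$-dimensional simplicial complex $Y$ with every fiber of diameter at most $1$. My plan is to build such a $\pi$ as the nerve map of a good open cover: produce a cover $\mathcal{U} = \{U_\alpha\}$ of $M$ satisfying (i) $\Diam U_\alpha \le 1$ for each $\alpha$ and (ii) multiplicity at most $n$, and let $\pi$ be the standard partition-of-unity map to $N(\mathcal{U})$. Condition (ii) forces $\dim N(\mathcal{U}) \le n-1$, and each fiber lies in some multiple intersection $U_{\alpha_0} \cap \cdots \cap U_{\alpha_k} \subset U_{\alpha_0}$, hence has diameter at most $1$. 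So the theorem reduces to a covering lemma.

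A first approximation comes from a Vitali-type construction. Let $\{x_j\}$ be a maximal $\tfrac{1}{10}$-separated net in $M$ and put $B_j = B(x_j, \tfrac{1}{5})$. The $B_j$ cover $M$ with diameters less than $1$; since the balls $B(x_j, \tfrac{1}{20})$ are pairwise disjoint and each lies in a unit ball of volume at most $\epsilon_n$, the multiplicity of $\{B_j\}$ is bounded by a constant $C_n$ depending only on $n$. However $C_n$ will in general be much larger than $n$, so the resulting nerve is only finite-dimensional, and the real content of the theorem is to trade this crude bound for the sharp value $n$.

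This sharpening is the main obstacle and is where the full strength of the volume hypothesis must be leveraged, not merely as a packing count. The key intuition is that whenever $n+1$ sets of the current cover have common intersection, that intersection sits in a ball of volume at most $\epsilon_n$, so it is very small in $n$-dimensional measure and should itself be ``essentially $(n-1)$-dimensional''. My plan is to exploit this through an inductive modification scheme: introduce a complexity invariant on covers (for example, the total volume of the union of $(n+1)$-fold intersections), and at each step perform a local modification near a high-overlap point that strictly decreases the invariant while enlarging the sets involved by only a small amount. Choosing $\epsilon_n$ small enough ensures that the total diameter increase, summed over all iterations, fits within a geometric-series budget and the final cover still has sets of diameter $\le 1$. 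The delicate technical points will be gluing the local modifications into a globally continuous cover and maintaining the diameter budget through the induction; termination leaves a cover of multiplicity at most $n$ as required, and applying the nerve-map construction to it gives the desired $\pi$.
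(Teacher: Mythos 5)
Your reduction of the theorem to producing a cover of diameter $\le 1$ and multiplicity $\le n$ is fine (this is Lemma \ref{uryleb}), but the proposal stops exactly where the theorem begins. The ``inductive modification scheme'' --- a complexity invariant such as the volume of the union of $(n+1)$-fold intersections, decreased by local modifications near high-overlap points within a geometric-series diameter budget --- is not an argument; no mechanism is given for why a small-volume $(n+1)$-fold intersection can be removed by enlarging sets only slightly, and the intuition that a small-volume set is ``essentially $(n-1)$-dimensional'' is essentially the statement being proved, so as written the scheme is circular. The paper does not manipulate the cover at all at this stage: it fixes a cover by small balls, maps $M$ to the \emph{rectangular nerve} by a subordinate partition-of-unity-type map with controlled Lipschitz behavior, and then homotopes that map skeleton by skeleton, $\phi^{(D)} \sim \cdots \sim \phi^{(n)} \sim \phi^{(n-1)}$, into the $(n-1)$-skeleton while keeping it subordinate to the cover. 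The quantitative engine is twofold: Lemma 5 of \cite{Gu2} (quoted as Lemma \ref{Gu2lemma}), which chooses the balls so that the initial map satisfies $\Vol_n[\phi^{(D)}(X)\cap \Star(F)] \le C\epsilon r_1(F)^{n+1}e^{-\beta d(F)}$, and the new push-out lemma for small surfaces (Lemma \ref{pushoutlemma}, proved via a Wenger-style isoperimetric extension lemma), which lets each skeletal homotopy proceed without increasing volumes more than a factor controlled by a convergent infinite product. The final estimate $(*)$, that the image misses a point of every $n$-face, is what replaces your hoped-for multiplicity reduction. None of this machinery, or any substitute for it, appears in your plan.

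There is also a concrete error in your ``first approximation'': the claim that the cover by balls $B(x_j,\tfrac15)$ around a maximal $\tfrac1{10}$-separated net has multiplicity bounded by a constant $C_n$ depending only on $n$ is false. The packing argument you invoke needs a lower bound on the volumes of the small balls $B(x_j,\tfrac1{20})$ relative to the ambient ball, which requires a doubling or curvature hypothesis that you do not have; a manifold with many thin fingers meeting a small region gives arbitrarily high multiplicity. This is not fatal (compactness alone gives a finite nerve), but it signals the real issue: in the hypotheses of Theorem \ref{main} the number of balls and the nerve dimension $D$ are \emph{not} dimensional constants, and the whole difficulty is to descend from dimension $D$ to $n-1$ with quantitative control, which your proposal does not supply.
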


This theorem is a slightly stronger version of an estimate about filling radius from \cite{Gu2}.  The filling radius is another metric invariant introduced in \cite{Gr1}.   The filling radius of 
$(M^n, g)$ has a similar flavor to $UW_{n-1}(M^n, g)$.   For example, the filling radius of a product $S^1(W) \times S^1(L)$ is also around $W$ (assuming $W < L$).  It is not hard to show that $Fill Rad(M^n, g) \lesssim UW_{n-1}(M^n, g)$ -- see Appendix 1 of \cite{Gr1}, (B)-(D).  It is not known whether there is an inequality in the other direction.  I suspect that there is a closed manifold $M^n$ and a sequence of metrics $g_j$ where the ratio $UW_{n-1}(M^n, g_j) / Fill Rad(M^n, g_j)$ gets arbitrarily large, but I don't know of any examples.  If so, a bound on Uryson (n-1)-width is slightly stronger than a bound on filling radius.

One of the main theorems of \cite{Gr1} bounds the filling radius of a Riemannian manifold in terms of its volume.  

\begin{thm} \label{fillrad} (Gromov, \cite{Gr1}, Filling radius inequality) If $(M^n, g)$ is any closed Riemannian manifold, then $Fill Rad(M^n, g) \le C_n Vol (M^n, g)^{1/n}$.
\end{thm}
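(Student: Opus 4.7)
The plan is to exploit the Kuratowski isometric embedding $\iota : M \hookrightarrow L^\infty(M)$ defined by $\iota(x) = d_M(x, \cdot)$. Under this embedding the filling radius $Fill Rad(M,g)$ equals the infimum of $r > 0$ such that the push-forward of the fundamental class, $\iota_*[M]$, bounds an $(n+1)$-chain supported in the open $r$-neighborhood of $\iota(M) \subset L^\infty(M)$ (with $\mathbb{Z}$ or $\mathbb{Z}_2$ coefficients according to whether $M$ is orientable). Setting $V = \Vol(M,g)$, the goal is to construct such a filling at radius $R = C_n V^{1/n}$.

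First I would cover $M$ by a Vitali-type collection of balls $B(x_i, r_i)$ with controlled overlap, tuning the radii so that each ball has volume comparable to $r_i^n$. Where the local volume density is small the radius $r_i$ can be taken small, while where it is large the ball is essentially Euclidean and a local coning in $L^\infty(M)$ handles it directly. The essential input is a local-to-global relation between volume and covering multiplicity: the global bound $V$ forces the selected radii to be on average no larger than $R$, and cones a dimensional bound on overlaps.

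Next I would turn this cover into a filling chain by using the nerve $N$. A barycentric map $M \to |N|$, linearly interpolated in $L^\infty(M)$ against $\iota$, produces a chain in $U_R(\iota(M))$ whose boundary equals $\iota_*[M]$ minus a chain supported on a realization of $|N|$. Because the nerve has strictly lower effective complexity than the original configuration, a Federer--Fleming-type deformation can be applied again to the residual chain. The volume hypothesis ensures that this recursion terminates within a dimensional number of steps, with the total radius used remaining of order $R$.

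The main obstacle, and the technical heart of the argument, is the bookkeeping through this recursion: one must simultaneously control the radius of the chain at each stage, the dimension of the residue, and a monotone complexity measure driven by the volume hypothesis. Choosing $C_n$ so that these three quantities close up consistently, and handling the possibility that the cover has bad combinatorics at a single scale (which forces a multi-scale Vitali selection), is where most of the work lies. A deeper difficulty is that no pointwise volume-density argument is available a priori; one only has the integrated bound $V$, and converting it into local control is what makes the choice of covers and of the deformation scheme delicate.
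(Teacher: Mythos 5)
There is a genuine gap: your outline never supplies the mechanism that converts the single integrated bound $V=\Vol(M,g)$ into \emph{uniform} distance control for the filling chain, and that conversion is the entire content of the theorem. Filling radius requires every point of the filling of $\iota_*[M]$ to lie within $R=C_nV^{1/n}$ of $\iota(M)$; your argument only claims the selected radii are ``on average'' of order $R$, and the step asserting that where the volume density is large ``the ball is essentially Euclidean and a local coning handles it directly'' is unjustified (no hypothesis makes high-density balls Euclidean, and high-density regions are exactly where nerve maps and Federer--Fleming-type deformations can stretch volumes uncontrollably). Likewise, the claim that the recursion on residual chains ``terminates within a dimensional number of steps'' with total radius $O(R)$ is asserted, not proved, and there is a further technical issue that Federer--Fleming deformation needs a polyhedral ambient structure, which $L^\infty(M)$ does not have; the known proofs replace it by cone constructions plus an isoperimetric (filling volume) inequality. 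In short, what you call ``bookkeeping'' is the theorem: the standard ways to close it are either Gromov's two-step argument (first $\mathrm{FillVol}(M)\le C_n V^{(n+1)/n}$, then a minimality/monotonicity argument showing a small-volume filling can be taken to lie in a small neighborhood of $\iota(M)$), or a direct construction of a map to an $(n-1)$-complex with small fibers.

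For comparison, this paper does not reprove Gromov's inequality by chain deformations at all: it deduces it from the stronger local statement. One chooses $R$ with $\epsilon_n R^n=\Vol(M,g)$, so every $R$-ball trivially has volume at most $\epsilon_n R^n$; Theorem \ref{main} then gives $UW_{n-1}(M,g)\le R$, and the general inequality $Fill Rad \lesssim UW_{n-1}$ (Appendix 1 of \cite{Gr1}) yields $Fill Rad(M,g)\le C_n\Vol(M,g)^{1/n}$. The quantitative local-to-global work that your sketch leaves open is carried there by the rectangular nerve with the volume estimate of Lemma \ref{Gu2lemma} (from \cite{Gu2}) and by the pushout lemma for small surfaces, which controls the volume growth at every skeleton-lowering step -- precisely the loss your recursion does not control.
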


The paper \cite{Gr1} raised the question whether $UW_{n-1}(M^n, g)$ is also bounded in terms of the volume of $(M^n, g)$.  As a corollary of Theorem \ref{main}, we get such an estimate.

\begin{corollary} If $(M^n, g)$ is any closed Riemannian manifold, then $UW_{n-1}(M^n, g) \le C_n \Vol (M^n, g)^{1/n}$.
\end{corollary}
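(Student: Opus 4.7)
The plan is to apply Theorem~\ref{main} directly, with an appropriate choice of the radius~$R$. The essential observation is trivial: for any closed Riemannian manifold $(M^n,g)$ and any $x \in M$, the ball $B(x,R) \subseteq M$ satisfies
\[
\Vol(B(x,R)) \le \Vol(M^n,g).
\]
So if I pick $R$ large enough that $\Vol(M^n,g) \le \epsilon_n R^n$, the hypothesis of Theorem~\ref{main} is automatically satisfied.

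Concretely, I would set $R := \epsilon_n^{-1/n} \Vol(M^n,g)^{1/n}$, so that $\epsilon_n R^n = \Vol(M^n,g)$. Then every metric ball of radius $R$ in $M$ has volume at most $\epsilon_n R^n$. Theorem~\ref{main} then gives
\[
UW_{n-1}(M^n,g) \le R = \epsilon_n^{-1/n}\,\Vol(M^n,g)^{1/n},
\]
and the corollary follows with $C_n := \epsilon_n^{-1/n}$, where $\epsilon_n$ is the constant from Theorem~\ref{main}.

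There is no real obstacle here: the content of the corollary is entirely contained in Theorem~\ref{main}, and the passage from ``volume of every $R$-ball is small'' to ``volume of $M$ is small'' is immediate because balls are subsets of $M$. The only subtlety worth noting is conceptual: this argument shows that the total volume of $M$ is the only obstruction to making the ball-volume hypothesis hold, so the volume bound on $UW_{n-1}$ is in fact equivalent (up to the constant $\epsilon_n$) to the local bound of Theorem~\ref{main} applied at the scale $R \sim \Vol(M)^{1/n}$.
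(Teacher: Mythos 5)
Your proposal is correct and is essentially the paper's own argument: choose $R$ so that $\epsilon_n R^n = \Vol(M^n,g)$, note that every $R$-ball has volume at most $\Vol(M^n,g)$, and apply Theorem \ref{main} to get $UW_{n-1}(M^n,g) \le R = \epsilon_n^{-1/n} \Vol(M^n,g)^{1/n}$. The paper merely leaves implicit the (trivial) observation that a ball's volume is bounded by the total volume, which you spell out.
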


\begin{proof} We choose $R$ so that $\epsilon_n R^n = \Vol (M^n, g)$.  By Theorem \ref{main}, $UW_{n-1}(M^n, g) \le R = \epsilon_n^{-1/n} \Vol(M^n, g)^{1/n}$.  \end{proof}

In \cite{Gr3}, Gromov conjectured Theorem \ref{main}, and he also conjectured a slightly weaker theorem about the filling radius.  The paper \cite{Gu2} proved the filling radius version of this inequality.  

\begin{thm} \label{locfillrad} (Local filling radius inequality) If $(M^n, g)$ is a closed Riemannian manifold, $R > 0$, and every ball in
$(M^n, g)$ of radius $R$ has volume $< \epsilon_n R^n$, then the filling radius of $(M^n, g)$ is $\le R$.
\end{thm}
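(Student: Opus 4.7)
The plan is to follow the strategy behind Gromov's proof of Theorem \ref{fillrad}, localizing each step so that only the hypothesis on $R$-ball volumes is used. I would work with the Kuratowski embedding $\iota \colon M \to L^\infty(M)$, which is isometric, so that proving $Fill Rad(M,g) \le R$ reduces to constructing an $(n+1)$-chain $\Sigma \subset L^\infty$ with $\partial \Sigma = \iota_*[M]$ and $\Sigma \subset U_R(\iota(M))$, where $U_R$ denotes the open $R$-neighborhood.

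To build such a $\Sigma$, I would cover $M$ by balls $B_i = B(x_i, r_i)$ with $r_i \le R$ via a Vitali-type selection, let $N$ be the nerve of the cover, and extend the partition-of-unity map $\phi \colon M \to N$ to a map $\Phi \colon \mathrm{Cone}(N) \to L^\infty$ skeleton by skeleton: for each simplex $\sigma = [x_{i_0}, \ldots, x_{i_k}]$ of $N$, cone the image of $\partial\sigma$ to the average of the points $\iota(x_{i_0}), \ldots, \iota(x_{i_k})$ in $L^\infty$. This adds displacement $O(\max_j r_{i_j})$ at stage $k$, so the resulting filling $\Sigma = \Phi(\mathrm{Cone}(N))$ lies in $U_R(\iota(M))$ provided the cumulative coning radii sum to at most $R$ and $\dim N \le n$.

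The main obstacle is that the hypothesis gives only an upper bound on $\Vol(B(x,R))$; with no curvature assumption, small balls have no a priori lower volume bound, so a fixed-scale Vitali packing yields no control on the combinatorics of $N$. To overcome this I would choose the radii adaptively: for each basepoint $x$, let $r(x) \le R$ be the smallest scale at which $\Vol(B(x,r))/r^n$ crosses a threshold $c_n$. Below $r(x)$ the ball is essentially Euclidean so the ratio is large, at $r(x)$ the ratio equals $c_n$, and the hypothesis caps it by $\epsilon_n$ at scale $R$. This furnishes a uniform lower bound on $\Vol\bigl(\tfrac{1}{5}B_i\bigr)$, which via the disjointness of the inner balls controls the multiplicity of $\{B_i\}$ and hence the dimension and local complexity of $N$. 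Carrying this adaptive scaling through dimensions $k = 0, \ldots, n-1$ while keeping the accumulated coning error under $R$ is the step I expect to be hardest; the trade-off between smaller scales (smaller per-stage error, more cells) and larger scales (larger error, fewer cells) is delicate, and the constant $\epsilon_n$ will have to be chosen small enough in terms of $n$ for the geometric series of cone radii to close up below $R$.
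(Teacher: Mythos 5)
Your overall strategy --- localize Gromov's filling argument using a cover by balls chosen at the adaptive scale where $\Vol(B(x,r))/r^n$ first drops to a threshold --- is genuinely the starting point of the actual proof in \cite{Gu2} (and of this paper, via Lemma \ref{Gu2lemma}). But there is a real gap at the step your plan leans on most: the claim that the lower volume bounds at the selected scales, together with disjointness of the inner balls, bound the multiplicity of the cover and force $\dim N \le n$. Without doubling or curvature control, balls containing a common point can have radii spanning many scales, and you have no upper volume bound at those intermediate scales with which to run the packing count; the hypothesis controls volume only at scale $R$, which is useless against $c_n r_i^n$ when $r_i \ll R$. Moreover, a ball cover of multiplicity $\le n$ with controlled diameters would, by Lemma \ref{uryleb}, already give the Uryson-width conclusion of Theorem \ref{main} by soft nerve arguments --- so this single step would be carrying the entire difficulty of the problem, and it is not available by Vitali-type selection. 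In \cite{Gu2} the nerve is allowed to have unbounded dimension; what replaces a multiplicity bound is the quantitative estimate of Lemma \ref{Gu2lemma}, that the image volume of the nerve map in the star of a face decays exponentially in the dimension of the face.

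Second, even granting $\dim N \le n$, the coning construction does not close the argument. The cycle $\phi_*[M]$ in an $n$-dimensional nerve need not bound there, and coning it off (to averages, or to the cone point of $\mathrm{Cone}(N)$) stays in the $R$-neighborhood of $\iota(M)$ only if the cone point is close to $\iota(M)$, which fails globally. What is actually needed is the estimate $(*)$ of Section 1: the image does not cover any $n$-face, so the cycle can be pushed into the $(n-1)$-skeleton (Lemma \ref{easypush}) while the deformation tracks an $(n+1)$-chain near $\iota(M)$. Controlling the image volumes through the successive pushes from the top skeleton down to the $n$-skeleton is where the work lies: \cite{Gu2} does it with minimal-surface monotonicity, and this paper does it with the pushout lemma for small surfaces (Lemma \ref{pushoutlemma}). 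Note finally that the present paper does not prove Theorem \ref{locfillrad} directly at all: it quotes it from \cite{Gu2} and observes that it also follows from Theorem \ref{main} combined with the standard inequality $Fill Rad(M^n,g) \lesssim UW_{n-1}(M^n,g)$ from Appendix 1 of \cite{Gr1}. As it stands, your proposal is missing the ideas that make either route work.
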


Theorem \ref{main} is a slightly stronger version of the local filling radius inequality, Theorem \ref{locfillrad}, which in turn is a stronger version of the filling radius inequality, Theorem \ref{fillrad}.  For more context about metric geometry and systolic geometry, the reader can consult \cite{Gr1} or \cite{Gu3}.  

The proof of Theorem \ref{main} closely follows the proof of Theorem \ref{locfillrad} with one new ingredient, which we call a pushout lemma for small surfaces.  This pushout lemma is the new observation in this paper.  

\subsection{A push-out lemma for small surfaces}

In the late 1950's, Federer-Fleming proved (a close relative of) the following result.

\begin{lemma} \label{ffpushout} (Federer-Fleming push-out lemma) Suppose that $X$ is a compact piecewise-smooth $n$-dimensional manifold with boundary.  Suppose that $K \subset \mathbb{R}^N$ is a bounded, open convex set, and $\phi_0: (X, \partial X) \rightarrow (K, \partial K)$ is a piecewise smooth map.  If $n < N$,
then $\phi_0$ may be homotoped to a map $\phi_1: X \rightarrow \partial K$ so that the following holds.

1. The map $\phi_1$ agrees with $\phi_0$ on $\partial X$.

2. $\Vol_n \phi_1(X) \le C(K,N,n) \Vol_n \phi_0(X)$.

\end{lemma}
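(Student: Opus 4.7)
The plan is to use the classical Federer–Fleming construction: pick a point $p$ in the interior of $K$, radially project $\phi_0$ from $p$ onto $\partial K$, and then average over $p$ to choose one for which the $n$-volume of the pushed-out map is controlled.

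First I would fix a closed ball $B \subset K$ whose closure is disjoint from $\partial K$, and for each $p \in B$ with $p \notin \phi_0(X)$ define the radial projection $\pi_p \colon K \setminus \{p\} \to \partial K$ sending $q$ to the unique point where the ray from $p$ through $q$ meets $\partial K$. Since $K$ is convex, $\pi_p$ is well defined and is the identity on $\partial K$; because $\phi_0(\partial X) \subset \partial K$, the composition $\phi_1 := \pi_p \circ \phi_0$ automatically agrees with $\phi_0$ on $\partial X$, establishing property (1). The required homotopy is the straight-line motion of each point of $\phi_0(X)$ along the ray from $p$, which remains inside $K$ by convexity and which fixes $\partial X$ pointwise.

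For property (2), I would bound the $n$-dimensional Jacobian of $\pi_p$. Using convexity and the fact that $B$ is separated from $\partial K$, a direct computation at any $q \in K \setminus \{p\}$ restricted to any $n$-plane yields
\[
|\mathrm{Jac}_n \pi_p|(q) \le \frac{C(K)}{|q-p|^n}.
\]
By the area formula and Fubini,
\[
\int_B \Vol_n\bigl(\pi_p \circ \phi_0(X)\bigr)\, dp \le C(K) \int_{\phi_0(X)} \int_B \frac{dp}{|y-p|^n}\, d\Vol_n(y) \le C(K,N,n)\,\Vol_n \phi_0(X),
\]
where the inner integral is uniformly bounded in $y$ because $N > n$ makes $|y-p|^{-n}$ integrable on balls in $\mathbb{R}^N$. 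This is the only point in the argument where the hypothesis $n < N$ is used. Averaging produces a $p \in B$ (avoiding the $\Vol_N$-null set $\phi_0(X)$, which is null since $n < N$) for which $\Vol_n \phi_1(X) \le (C/\Vol_N(B))\,\Vol_n \phi_0(X)$, as required.

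The main obstacle is the Jacobian bound above: one must show that for $p$ in a fixed interior ball, the supporting hyperplane of $\partial K$ at the ray's exit point is bounded away from being parallel to the radial direction from $p$, so that radial projection does not stretch $n$-volumes beyond the scalar factor $|q-p|^{-n}$. This is a quantitative fact about convex bodies, with constants controlled by how deeply $B$ sits inside $K$. Once this is in place, the rest is standard Fubini plus the dimensional input $N > n$ plus averaging.
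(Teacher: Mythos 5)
Your proposal is correct: the averaged radial projection from an interior point, with the Jacobian bound $C(K)/|q-p|^n$ and the Fubini argument using $n<N$, is exactly the classical Federer--Fleming argument, which is also the proof the paper relies on (it does not reprove the lemma but cites \cite{Gu1}, where this same construction is carried out). The one point to keep in mind is that $\partial K$ need not be smooth, so the Jacobian bound should be phrased as a Lipschitz estimate for the radial projection of a convex body, with constant depending on how deeply the ball $B$ sits inside $K$ --- which you correctly identified as the quantitative input.
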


This Lemma plays an important role in metric geometry.  See \cite{Gu1} for a proof and for a description of the connection with the filling radius inequality.  In the proof of Theorem \ref{main}, we need to construct a homotopy of this type, but we need a much more careful bound for the volume of $\phi_1(X)$.  We need to find a homotopy where $\Vol_n \phi_1 (X)$ is only very slightly larger than $\Vol_n \phi_0(X)$.  We show that if $\Vol_n \phi_0(X)$ is small, then we can homotope $\phi_0$ to $\phi_1$ so that $\phi_1$ lies near to $\partial K$ and $\Vol_n \phi_1 (X) \le \Vol_n \phi_0(X)$.  
Here is the statement of our pushout lemma for small surfaces.  

\begin{lemma} \label{pushoutlemma} (Pushout lemma for small surfaces) For each dimension $n \ge 2$, there is a constant $\sigma_n$ so that the following holds.  

Suppose that $X$ is a compact piecewise-smooth $n$-dimensional manifold with boundary.  
Suppose that $K \subset \mathbb{R}^N$ is a (bounded open) convex set, and $\phi_0: (X, \partial X) \rightarrow (K, \partial K)$ is a piecewise-smooth map.    Let $W = \sigma_n \Vol_n \phi_0(X)^{1/n}$.
Then $\phi_0$ may be homotoped to a map $\phi_1: (X, \partial X) \rightarrow (K, \partial K)$ so that the following holds.

1. The map $\phi_1$ agrees with $\phi_0$ on $\partial X$.

2. $\Vol_n \phi_1(X) \le \Vol_n \phi_0(X)$.

3. The image $\phi_1(X)$ lies in the $W$-neighborhood of $\partial K$.

\end{lemma}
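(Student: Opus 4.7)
The plan is to build $\phi_1$ by iterating a \emph{slicing-and-filling} move that strips off the deepest portion of the current image $\phi(X)$ and replaces it by an isoperimetric cap on a level set of the distance-to-boundary function. Unlike the Federer--Fleming pushout (Lemma~\ref{ffpushout}), which incurs a dimensional constant in the volume bound per step, this move will be volume-non-increasing; the hypothesis $W = \sigma_n V^{1/n}$ with $V = \Vol_n \phi_0(X)$ is calibrated precisely to enable this.

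After a standard PL reduction, let $f(y) = \operatorname{dist}(y, \partial K)$. Since $K$ is convex, $f$ is concave and $1$-Lipschitz, so the superlevel sets $K_t = \{f \ge t\}$ are convex. For a current iterate $\phi$, write $\Psi(\phi) = \max_{x} f(\phi(x))$ and assume $\Psi(\phi) > W$. For a level $t^*$ yet to be chosen put $Y = \phi^{-1}(K_{t^*})$, and apply coarea to $f$ on $\phi(X) \cap K_{t^*}$:
\[ \int_{t^*}^{\Psi(\phi)} \Vol_{n-1}\bigl(\phi(X) \cap \partial K_t\bigr)\,dt \;\le\; \Vol_n \phi(Y). \]
Taking the interval to have length $\sim V^{1/n}$, the mean value principle yields a level $t^*$ at which the slice $\Sigma = \phi(X) \cap \partial K_{t^*}$ is an $(n-1)$-cycle on the convex hypersurface $\partial K_{t^*}$ of $(n-1)$-volume $A \le \Vol_n \phi(Y) / (\Psi(\phi) - t^*)$.

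An isoperimetric inequality on $\partial K_{t^*}$ (with Euclidean-type constant for small cycles, obtained for instance by filling $\Sigma$ in $\mathbb{R}^N$ and $1$-Lipschitz projecting to a collar of $\partial K_{t^*}$ inside $K \setminus \operatorname{int} K_{t^*}$) provides an $n$-chain $D$ with $\partial D = \Sigma$ and
\[ \Vol_n D \;\le\; C_n A^{n/(n-1)} \;\le\; C_n\Bigl(\tfrac{\Vol_n \phi(Y)}{\Psi(\phi) - t^*}\Bigr)^{n/(n-1)}, \]
which is at most $\Vol_n \phi(Y)$ provided $\Psi(\phi) - t^* \ge C_n^{(n-1)/n}\, \Vol_n \phi(Y)^{1/n}$. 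Since $\Vol_n \phi(Y) \le V$, choosing $\sigma_n$ large enough makes this compatible with $t^* > W$ whenever $\Psi(\phi)$ exceeds $W$ by at least $c_n V^{1/n}$. Excising $Y$ and replacing $\phi|_Y$ by a PL map $Y \to D$ extending $\phi|_{\partial Y}$, connected to $\phi|_Y$ by the straight-line homotopy inside the convex $K$, yields $\phi'$ with $\phi'|_{\partial X} = \phi|_{\partial X}$ (since $\phi(\partial X) \subset \partial K$ is disjoint from every $K_{t^*}$ with $t^* > 0$), $\Vol_n \phi'(X) \le \Vol_n \phi(X)$, and $\Psi(\phi') \le t^* \le \Psi(\phi) - c_n V^{1/n}$. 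Iterating finitely often drives $\Psi$ below $W$, and concatenating the homotopies produces $\phi_1$.

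The main obstacle is the \emph{geometric realization} of the replacement: the boundary data $\phi|_{\partial Y}$ must actually extend to a PL map $Y \to D$. This is a topological extension problem, which I would handle by subdividing $X$ so each component of $Y$ is a PL ball and using that $\partial K_{t^*}$ is homotopy equivalent to $S^{N-1}$ and hence highly connected in the codimension $n < N$ regime inherited from the Federer--Fleming setup. A secondary technical point is pinning down the isoperimetric constant on the convex hypersurface $\partial K_{t^*}$, which is where the proof commits to a specific value of $\sigma_n$; a final auxiliary step with $t^* = W$ handles any residual gap $\Psi \in (W, W + c_n V^{1/n})$ left at the end of the iteration.
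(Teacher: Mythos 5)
Your overall architecture (strip the deep part along a slice, refill it isoperimetrically, use the hypothesis $W \sim \Vol_n\phi_0(X)^{1/n}$ to make the refill affordable) is the right one, but the pivotal step --- the existence of a slicing level $t^*$ at which the isoperimetric cap costs no more than the volume removed --- is not established by your coarea/mean-value argument, and the inequality you assert is false as stated. The bound $A \le \Vol_n\phi(Y)/(\Psi - t^*)$ is circular: both $Y$ and the averaging interval depend on the same $t^*$. Mean value over a fixed interval $[a,\Psi]$ only yields some level $t$ whose slice area is controlled by the volume above $a$, whereas the volume you are permitted to spend is the possibly much smaller volume above $t$ itself. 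Concretely, if the deep part is a cone-like tentacle with volume profile $V(t) = c(\Psi-t)^n$ and slice area $A(t) = nc(\Psi-t)^{n-1}$, then $A(t)(\Psi-t) = nV(t) > V(t)$ for every $t$, so no level satisfies your claimed bound; the inequality you actually need, $C_n A(t)^{n/(n-1)} \le V(t)$, holds only when the density $c$ is small, and that smallness must be extracted from the global hypothesis on $\Vol_n\phi_0(X)$, not from averaging. This is exactly what the paper's Lemma \ref{incrpushout} supplies: either a ``good'' slice exists, or else coarea gives $-V'(t) \ge A(t) \gtrsim V(t)^{(n-1)/n}$ on a range of length comparable to $W$ (not merely $\sim V^{1/n}$ with an absolute constant), and integrating this monotonicity-type differential inequality forces $V \gtrsim W^n$, contradicting the choice of $\sigma_n$. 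Without that dichotomy your iteration has no guaranteed volume-non-increasing move, and the same defect affects your ``final auxiliary step'' for the residual gap.

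The second gap is the one you flagged yourself: converting the chain filling $D$ into a map $Y \to D$ extending $\phi|_{\partial Y}$ \emph{with volume control}. Subdividing $Y$ into PL balls does not respect the prescribed boundary data across internal faces, high connectivity of $\partial K_{t^*}$ produces extensions with no bound on $n$-volume, and the proposed ``$1$-Lipschitz projection'' of a Euclidean filling onto a collar of $\partial K_{t^*}$ does not work: such a filling can be taken in the convex hull of $\Sigma$, which lies inside $K_{t^*}$, where nearest-point projection to the boundary is not distance-decreasing. The paper avoids chains altogether by proving a map-level isoperimetric statement, the Extension Lemma of Section 2 (a Wenger-style induction using the cone inequality), which extends the boundary map over the whole preimage with the bound $I(n)A^{n/(n-1)}$ and with no topological obstruction since the target is a ball; it then finishes not by driving the maximum depth down but by decreasing total volume by definite amounts until the deep part has measure less than $\delta$, and applying the Federer--Fleming pushout plus a small bookkeeping argument to keep $\Vol_n\phi_1(X) \le \Vol_n\phi_0(X)$. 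Slicing by level sets of the distance to $\partial K$ rather than by concentric spheres about a deep point is a legitimate variant, but as written both the good-level existence and the chain-to-map replacement are genuine gaps in your argument.
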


When $K$ is a ball, we have the following corollary.

\begin{corollary} For every $n$ and every $\epsilon > 0$, we can find $\delta > 0$ so that the following holds.  Suppose that $X$ is a compact piecewise-smooth $n$-dimensional manifold with boundary.  Let $\phi_0: (X, \partial X) \rightarrow (B^N, \partial B^N)$ be a map to the unit N-ball.  If $\Vol_n \phi_0(X) < \delta$, then we can homotope $\phi_0$ to $\phi_2: X \rightarrow \partial B^N$
so that $\phi_2$ agrees with $\phi_0$ on $\partial X$ and $\Vol_n \phi_2(X) < (1 + \epsilon) \Vol_n \phi_0(X)$.

\end{corollary}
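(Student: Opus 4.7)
The plan is to apply Lemma~\ref{pushoutlemma} to $\phi_0$ with $K = B^N$, and then push the resulting map the short remaining distance onto $\partial B^N$ by radial projection from the origin. The volume bound of Lemma~\ref{pushoutlemma} carries us most of the way ``for free,'' and the radial projection contributes only a tiny extra factor provided $\delta$ is chosen small enough.

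First I would apply Lemma~\ref{pushoutlemma} to obtain $\phi_1: (X, \partial X) \to (B^N, \partial B^N)$ homotopic to $\phi_0$ rel $\partial X$, with $\Vol_n \phi_1(X) \le \Vol_n \phi_0(X) < \delta$ and with image in the $W$-neighborhood of $\partial B^N$, where $W = \sigma_n \Vol_n \phi_0(X)^{1/n} < \sigma_n \delta^{1/n}$. Once $\delta$ is small enough that $W < 1$, the image of $\phi_1$ lies in $\{x \in B^N : |x| > 1 - W\}$ and in particular avoids the origin.

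Next I would set $\phi_2 = r \circ \phi_1$, where $r(x) = x/|x|$ is radial projection onto $S^{N-1}$, and connect $\phi_1$ to $\phi_2$ through the straight-line homotopy $H_t(x) = (1-t)\phi_1(x) + t\phi_1(x)/|\phi_1(x)|$. This is well-defined since $|\phi_1(x)| > 0$, it stays inside $B^N$ since $|H_t(x)| = (1-t)|\phi_1(x)| + t \le 1$, and it agrees with $\phi_0$ on $\partial X$ because $\phi_1 = \phi_0$ already sends $\partial X$ into $\partial B^N$, where $r$ is the identity. Concatenating this with the pushout homotopy yields a homotopy from $\phi_0$ to $\phi_2$ that is constant on $\partial X$.

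The quantitative input is the Jacobian of radial projection. At a point $p$ with $|p| = \rho > 0$, the differential $dr_p$ vanishes on the radial direction and equals $\rho^{-1}$ times the identity on the orthogonal complement, so the $n$-dimensional Jacobian of $r$ on any $n$-plane at $p$ is at most $\rho^{-n}$. Since $|\phi_1(x)| \ge 1 - W$ throughout, this yields $\Vol_n \phi_2(X) \le (1-W)^{-n}\Vol_n \phi_1(X) \le (1-W)^{-n}\Vol_n \phi_0(X)$. It then suffices to choose $\delta$ so small that $\sigma_n \delta^{1/n} < 1$ and $(1 - \sigma_n \delta^{1/n})^{-n} < 1 + \epsilon$, which is possible by continuity. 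I do not expect a serious obstacle here: Lemma~\ref{pushoutlemma} has already done the substantive work of moving $\phi_0$ near the boundary without increasing its $n$-volume, and the only remaining verification is the elementary Jacobian estimate for radial projection in the thin annular shell near $S^{N-1}$.
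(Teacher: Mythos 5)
Your proposal is correct and follows essentially the same route as the paper: apply Lemma \ref{pushoutlemma} to get $\phi_1$ near $\partial B^N$ with no volume increase, then radially project onto $\partial B^N$ and use the $(1-W)^{-n}$ volume distortion bound, choosing $\delta$ small so this factor is at most $1+\epsilon$. Your explicit straight-line homotopy and Jacobian computation are just slightly more detailed versions of the paper's Lipschitz-constant argument.
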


\begin{proof} By Lemma \ref{pushoutlemma}, we can homotope $\phi_0$ rel $\partial X$ to a map $\phi_1$ with $\Vol_n \phi_1(X)
\le \Vol_n \phi_0(X)$, and with $\phi_1(X)$ lying within the $W$-neighborhood of $\partial B^N$ for $W = \sigma_n \Vol_n \phi_0(X)^{1/n}
\le \sigma_n \delta^{1/n}$.  Now we radially project $\phi_1$ into $\partial B^N$, pushing away from the origin.  The resulting map is
$\phi_2$.  Since $\phi_1$ already
lies in the $W$-neighborhood of $\partial B^N$, the Lipschitz constant of this radial projection on $\phi_1(X)$ is $\le (1 - W)^{-1}$.  Hence
$\Vol_n \phi_2(X) \le (1- W)^{-n} \Vol_n \phi_0(X)$.  If we choose $\delta$ sufficiently small, we can arrange that  $(1 - W)^{-n} \le (1 + \epsilon)$. \end{proof}

We remark that without the assumption that $\Vol_n \phi_0(X)$ is small, we cannot hope to push $\phi_0$ into $\partial B^N$ without
stretching it significantly.  For example, if $\phi_0(X)$ is a flat n-disk through the origin of $B^N$, and if $\phi_0(\partial X)$ is the boundary of
the disk, then any homotopic map $\phi_2$ will have to have volume at least equal to that of an $n$-dimensional unit hemisphere.  The corollary
says that for very small surfaces, we can push the surface into $\partial B^N$ with only slight stretching.  

The intuition for our push-out lemma comes from minimal surface theory.  Suppose we consider all of the maps $\phi$ homotopic
to $\phi_0$ rel $\partial X$.  We let $V$ denote the infimal volume of $\phi (X)$ among all these maps.  Clearly $V \le \Vol_n \phi_0(X)$.  
It seems plausible that $V$ is actually realized by a map $\phi_1$ whose image is some kind of stationary object, such as a
stationary varifold, in the interior of $K$.  For the sake of intuition let us suppose that such a $\phi_1$ exists.  Since the image is a stationary varifold in the interior of $K$, it obeys the monotonicity formula.  Namely, for any regular point $p$ in $\phi_1(X)$ and
any $B(p,r) \subset K$, we have $\Vol_n \phi_1(X) \cap B(p,r) \ge \omega_n r^n$, where $\omega_n$ is the volume of the unit $n$-ball.
We see that $\omega_n r^n \le \Vol_n \phi_0(X)$.  Hence $\phi_1(X)$ lies in the $W$-neighborhood of $\partial K$, for 
$W = \omega_n^{-1/n} \Vol_n \phi_0(X)^{1/n}$.  This thought experiment provides intuition for our push-out lemma, and it also suggests what the sharp constant $\sigma_n$ should be: $\omega_n^{-1/n}$.

The proof of Theorem \ref{locfillrad} used some minimal surface theory and the monotonicity formula.  We give a direct constructive proof of Lemma \ref{pushoutlemma}, so no minimal surface theory is required in the proof of Theorem \ref{main}.  Now Theorem \ref{main} implies Theorem \ref{locfillrad}, so in particular, we get a modified proof of the local filling radius inequality without any use of minimal surface theory.  The proof of Lemma \ref{pushoutlemma} is based on Wenger's recent proof of Gromov's filling volume inequality, \cite{W}.  

\subsection{Background on Uryson width in topological dimension theory}

The Uryson width first appeared in topological dimension theory in the early 1900's.  In this subsection, we review some topological dimension theory.  We will see the role of Uryson width in topology, and we will compare our main theorem to some classical results of topological dimension theory.   In particular, we will see that our Theorem \ref{main} is a more quantitative version of a classical theorem of Szpilrajn comparing topological dimension with Hausdorff dimension.  We will outline the proof of Szpilrajn's theorem, and we will see how ``push out'' homotopies, as in Lemma \ref{pushoutlemma}, come into the story.  A number of other characters in the proof of Theorem \ref{main} also come from topological dimension theory.  

We first note that the Uryson width has an equivalent definition in terms of
open covers.  If $\{ O_i \}$ is an open cover of $X$, we say that it has multiplicity
$\le m$ if each point of $x$ lies in $\le m $ different sets $O_i$.  We say that
the diameter of the open cover is $\sup_i \Diam(O_i)$.

Open covers and Uryson width are connected via the idea of the nerve of a covering.  We recall the definition of the nerve of a cover to set the notation.  Suppose that $\{ O_i \}_{i=1, ..., S}$ is an open cover of $X$.  The nerve $N$ of this cover is a simplicial complex with one vertex $v_i$ for each set $O_i$.  Given a $(k+1)$-tuple of vertices, $v_{i_0}, ..., v_{i_k}$, there is a simplex in the nerve with these vertices if and only if $\cap_{j=0}^k O_{i_j}$ is non-empty.  The nerve $N$ naturally sits in $[0,1]^S$, where the vertex $v_i$ corresponds to the unit vector in the $i^{th}$ direction.  In particular, any map $F: X \rightarrow N$ has coordinates $F_1, ..., F_S$.  We say that the map $F$ is subordinate to the cover $\{ O_i \}$ if each $F_i$ is supported in $O_i$.  

\begin{lemma} \label{uryleb} A compact metric space $X$ has $UW_q(X) < W$
if and only if there is an open cover of $X$ with multiplicity $\le q+1$
and diameter $< W$.
\end{lemma}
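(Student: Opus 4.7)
The plan is to prove both directions via the classical correspondence between Uryson width and refined open covers from topological dimension theory.

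For $(\Rightarrow)$, I would start from the hypothesis that there is a continuous map $\pi: X \to Y$ to a $q$-dimensional simplicial complex with every fiber of diameter $\leq W'$ for some $W' < W$. First I would carry out a compactness argument: for each $y \in Y$, the fiber $\pi^{-1}(y)$ is a compact set of diameter $\leq W'$ in $X$, so it sits inside an open neighborhood of diameter $< W$ in $X$ (for instance its open $(W-W')/3$-thickening). Since $X$ is compact and $\pi$ is continuous, this open neighborhood contains $\pi^{-1}(V_y)$ for some open neighborhood $V_y$ of $y$ in $Y$, and then $\pi^{-1}(V_y)$ itself still has diameter $< W$. This produces an open cover $\{V_y\}_{y \in Y}$ of $Y$, and it suffices to refine it to a cover of $Y$ of multiplicity $\leq q+1$ and pull back. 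A sufficiently fine subdivision of $Y$ has its open vertex stars refining $\{V_y\}$, and in any $q$-dimensional simplicial complex the open vertex stars form a cover of multiplicity $\leq q+1$, because a point in the interior of a $k$-simplex lies in the open stars of exactly its $k+1 \leq q+1$ vertices. Pulling this refinement back via $\pi$ then yields the desired cover of $X$ of multiplicity $\leq q+1$ and diameter $< W$.

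For $(\Leftarrow)$, I would use compactness of $X$ to pass to a finite subcover $\{O_1, \ldots, O_S\}$, still of multiplicity $\leq q+1$ and with $\Diam(O_i) \leq W''$ for some fixed $W'' < W$. The nerve $N \subset \RR^S$ of this cover is a simplicial complex of dimension $\leq q$, because the multiplicity condition forces every simplex of $N$ to have at most $q+1$ vertices. Choosing a partition of unity $\{\psi_i\}$ subordinate to $\{O_i\}$, I define $F : X \to N$ by $F(x) = \sum_i \psi_i(x) v_i$; this map lands in $N$ because the indices $i$ with $\psi_i(x) > 0$ correspond to sets $O_i$ having common point $x$, so their vertices span a simplex of $N$. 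If $F(x) = F(y)$, matching coefficients in the standard basis of $\RR^S$ forces $\psi_i(x) = \psi_i(y)$ for every $i$, so choosing any $i$ with $\psi_i(x) > 0$ places both $x$ and $y$ in $O_i$, giving $d_X(x,y) \leq \Diam(O_i) \leq W''$. Hence every fiber of $F$ has diameter $\leq W''$, which gives $UW_q(X) \leq W'' < W$.

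I expect the main obstacle to be the compactness step in $(\Rightarrow)$ that upgrades the pointwise fiber-diameter bound to a uniform statement about preimages of small open subsets of $Y$; the rest of the argument consists of standard facts about nerves of covers and vertex stars of simplicial complexes, which can be invoked as black boxes.
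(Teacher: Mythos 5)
Your proposal is correct and follows essentially the same route as the paper: one direction pulls back a fine multiplicity-$(q+1)$ cover of the $q$-dimensional target polyhedron (the paper asserts such covers exist, you supply them via vertex stars of a fine subdivision, plus the tube-lemma compactness step the paper leaves implicit), and the other direction maps $X$ to the nerve of the cover via a subordinate partition of unity and notes each fiber lies in a single $O_i$. No substantive difference from the paper's argument.
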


We sketch the proof of Lemma \ref{uryleb}.  Suppose that $X$ has an open cover with multiplicity $\le q+1$
and diameter $< W$.  Let $N$ denote the nerve of the cover.  Note that
$N$ is a simplicial complex of dimension $q$.  Let $\phi_i$ be a partition of unity
on $X$ with $\phi_i$ supported in $O_i$.  These $\phi_i$ give a continuous map $\phi$
from $X$ to the nerve $N$.  Note that $\phi$ is subordinate to the cover $\{ O_i \}$.  In particular, 
each fiber $\phi^{-1}(y)$ is contained in one of the sets $O_i$, and so it has diameter $< W$.  Hence $UW_q(X) < W$.

Now suppose that $UW_q(X) < W$.  Let $\pi: X \rightarrow Y$ be a map to 
a q-dimensional polyhedron so that each fiber has diameter $\le UW_q(X) + \epsilon < W$.
Since $X$ is compact, the image of $X$ is compact, and so we can assume that
$Y$ is compact.  A q-dimensional polyhedron has an open cover $O_i'$ with multiplicity at most $q+1$ where
the diameters of the sets $O_i'$ can be made arbitrarily small.  Then, $O_i = \pi^{-1}(O_i')$ give an open cover of $X$ with multiplicity at most $q+1$.  If the diameter of $O_i'$ is small enough, then each $O_i$ lies in an arbitrarily small neighborhood of a fiber $\pi^{-1}(y)$, and so each $O_i$ has diameter $< W$.  

The most famous estimate about Uryson width is the Lebesgue covering lemma.

\newtheorem*{lcl}{Lebesgue covering lemma}

\begin{lcl} Suppose that $\{ O_i \}$ is an open cover of the unit n-cube
$Q_n = [0,1]^n$ with multiplicity $\le n$.  Then the diameter of $\{ O_i \}$ is
$\ge 1$.  In other words, $UW_{n-1}(Q_n) = 1$.
\end{lcl}

The Lebesgue covering lemma was stated by Lebesgue and first proven by
Brouwer a little later.  For a proof, see \cite{HW}.   The Lebesgue covering lemma has many applications in topological dimension theory.  Since $UW_{n-1}(Q_n) > 0$, there is no injective continuous
map from $Q_n$ into an (n-1)-dimensional polyhedron.  In particular, there is no injective
continuous map from $\mathbb{R}^n$ into $\mathbb{R}^{n-1}$.  This implies the famous
topological invariance of dimension.

\newtheorem*{tid}{Topological invariance of dimension}

\begin{tid} (Brouwer, 1909) If $m \not= n$, then $\mathbb{R}^m$ and $\mathbb{R}^n$
are not homeomorphic.
\end{tid}

The Lebesgue covering dimension of a compact metric space $X$ is the smallest
$q$ so that $UW_q(X) = 0$.  In particular, the Lebesgue covering dimension of
$Q_n$ is equal to $n$, which follows from the Lebesgue covering lemma.

If $X_1$ and $X_2$ are homeomorphic compact metric spaces, then the Lebesgue covering dimension
of $X_1$ is equal to that of $X_2$.  In other words, $UW_q(X_1) = 0$ if and only if $UW_q(X_2) = 0$.
This follows fairly easily from the definition of Uryson width and the following observation.  
If $F: X_1 \rightarrow X_2$ is a homeomorphism
of compact metric spaces, then for every $\epsilon$ there exists a $\delta$ so that if $A \subset X_2$ has
diameter $< \delta$ then $F^{-1}(A) \subset X_1$ has diameter $< \epsilon$.  

(The Lebesgue covering lemma is a precise quantitative estimate for $UW_{n-1}(Q_n)$.  It easily implies non-sharp lower bounds on the Uryson (n-1)-width of other manifolds.  For instance, it follows that the Uryson (n-1)-width of the unit n-sphere is $\gtrsim 1$ with an explicit lower bound.  This was sharpened by Katz in \cite{K}, who proved the precise value of $UW_{n-1}(S^n)$.)

There are several notions of the ``dimension" of a compact metric space $X$, and it is interesting
to see how they relate.  One important notion is the Hausdorff dimension.  Szpilrajn proved that the
Lebesgue covering dimension of any $X$ is at most the Hausdorff dimension of $X$.  In fact, he
proved an even stronger theorem, which we now state.

\newtheorem*{sdi}{Szpilrajn dimension inequality}

\begin{sdi} Let $X$ be a compact metric space with n-dimensional Hausdorff measure equal to zero.
Then $UW_{n-1}(X) = 0$.  In other words, the Lebesgue covering dimension of $X$ is $\le n-1$.
\end{sdi}

(Szpilrajn's inequality was pointed out to me by Anton Petrunin and Vitali Kapovitch.)  

 Our main theorem is a quantitative version of the Szpilrajn theorem.  In particular, Szpilrajn's theorem implies that if $X$ is a compact metric space and every unit ball in $X$ has
n-dimensional Hausdorff measure exactly equal to zero, then $UW_{n-1}(X) \le 1$.  In Theorem \ref{main}, we suppose that $X$ is a closed $n$-dimensional Riemannian manifold.  Instead of assuming that every unit ball in $X$ has zero n-dimensional volume, we assume that every unit ball has n-dimensional volume at most a tiny constant $\epsilon_n$.  Under this weaker assumption, Theorem \ref{main} says that $UW_{n-1}(X)$ is still at most 1, provided that $\epsilon_n$ is small enough.  

We sketch here the proof of Szpilrajn's theorem.  The proof of Theorem \ref{main} follows the same outline as this proof, but it requires careful estimates at several steps.  

\begin{proof} Pick any $\epsilon > 0$.  Cover $X$ by finitely many open sets $O_i$ with diameter
$< \epsilon$.  Let $\phi: X \rightarrow N$ be a map to the nerve coming from a partition of unity on $X$, as in the discussion of Lemma \ref{uryleb} above.  

The idea of a mapping subordinate to the cover $\{ O_i \}$ is central to the proof.  Recall that a map $F: X \rightarrow N$ is subordinate to the cover $\{ O_i \}$ if each component $F_i$ is supported in $O_i$.  The map $\phi$ coming from a partition of unity (subordinate to the cover $\{ O_i \}$) is indeed subordinate to the cover.  
We can also arrange that the map $\phi$ is Lipschitz.  
Therefore, the image $\phi(X)$ has n-dimensional Hausdorff measure zero.

Let $D$ denote the dimension of the nerve $N$.  We will construct a sequence of homotopic maps from $X$ to $N$, 
$\phi = \phi^{(D)} \sim \phi^{(D-1)} \sim ... \sim \phi^{(n)} \sim \phi^{(n-1)}$, where
$\phi^{(k)}$ maps $X$ into the k-skeleton of the nerve $N$.  We denote the k-skeleton of $N$ by $N^{(k)}$.
All maps $\phi^{(k)}$ are Lipschitz and all maps subordinate to the cover.

For $n \le k \le D$, we homotope $\phi^{(k)}$ to $\phi^{(k-1)}$ as follows.  Since $\phi^{(k)}$ is Lipschitz,
$\phi^{(k)}(X)$ has n-dimensional Hausdorff measure zero.  Since $k \ge n$, the image 
$\phi^{(k)}(X)$ does not contain any k-face of $N$.  For each k-face $F_j \subset N$, we pick a
point $y_j \in F_j$ with $y_j$ not in the image of $\phi^{(k)}$.  We let $\tilde N^{(k)}$ denote 
the k-skeleton of $N$ with the points $y_j$ removed.

We let $R_k$ be a retraction from $\tilde N^{(k)}$ into $N^{(k-1)}$, taking $F_j \setminus \{y_j\}$ into
$\partial F_j$.  We define $\phi^{(k-1)}$ to be $R_k \circ \phi^{(k)}$.  We can arrange that 
the map $R_k$ is Lipschitz on any compact subset of $\tilde N^{(k)}$.  Therefore, $\phi^{(k-1)}$ is
Lipschitz.

Since $R_k$ maps each face $F \subset N^{(k)}$ into its closure $\bar F$, and since $\phi^{(k)}$ is
subordinate to the cover, it follows that $\phi^{(k-1)}$ is subordinate to the cover as well.

In particular, we get a map $\phi^{(n-1)}$ from $X$ to the (n-1)-dimensional polyhedron $N^{(n-1)}$ 
subordinate to the cover.  Because $\phi^{(n-1)}$ is subordinate to the cover, each fiber of the map
lies in one of the sets $O_i$ and has diameter $< \epsilon$.  Hence $UW_{n-1}(X) < \epsilon$.

\end{proof}

The proof of Theorem \ref{main} follows this outline, but it requires careful estimates.  By assumption, each unit ball of $X$ has small volume, and so each set $O_i$ in the cover can be taken to have small volume.  The map $\phi = \phi^{(D)}$ is qualitatively Lipschitz, but we need to control the Lipschitz constant of $\phi$ in order to bound the volume of $\phi(O_i)$.  This part of the argument is handled by ideas from \cite{Gu2}.  Suppose now that $\phi(O_i) = \phi^{(D)}(O_i)$ has very small volume for each $i$.  We will construct a sequence of homotopies, $\phi^{(D)} \sim \phi^{(D-1)} \sim ... \sim \phi^{(n)}$.  At each step, we need to prove that $\phi^{(k)}(O_i)$ remains small.  If $\Vol_n \phi^{(k)}(O_i) = 2 \Vol_n \phi^{(k+1)}(O_i)$, then it turns out that $\phi^{(n)}(O_i)$ will be far too large.  So we need to homotope $\phi^{(k+1)}$ to $\phi^{(k)}$ while being very careful about how much volumes stretch.  This step is accomplished with Lemma \ref{pushoutlemma}, the pushout lemma for small surfaces.  

For more information about topological dimension
theory, one can read the book \cite{HW}.  For a more geometric point of view
about Uryson width see the survey paper \cite{Gr2}.

To end this discussion, we remark that trying to transform a qualititative theorem from topological dimension theory into a quantitative estimate in Riemannian geometry doesn't always work.  For example we consider the following theorem on the locality of topological dimension.

\newtheorem*{localdim}{Locality of Lebesgue covering dimension}
\begin{localdim} Let $X$ be a compact metric space.  Suppose that $X$ is covered by open sets
$O_i$ and suppose that $UW_q(O_i) = 0$ for each $i$.  Then $UW_q(X) = 0$.  \end{localdim}

This locality theorem follow from the
central results of topological dimension theory in \cite{HW}.  The main theme of the book \cite{HW} is
that various ways of defining the `topological dimension' of a compact metric space are equivalent to
each other.  The central definition in the book is the topological dimension (or Menger topological dimension)
defined inductively as follows.  A compact metric space has topological dimension -1 if it is empty.
A compact metric space $X$ has topological dimension $\le q$ at a point $x \in X$ if $x$ has arbitrarily small
open neighborhoods $U$ such that $\partial U$ has topological dimension $\le q-1$.  A compact metric
space $X$ has topological dimension $\le q$ if it has topological dimension $\le q$ at each point $x \in X$.
Theorem V8 of \cite{HW} says that for a compact metric space $X$, the topological dimension and the Lebesgue covering dimension are the same.  But the definition of topological dimension is clearly local, because it only involves
arbitrarily small neighborhoods of every point.  If $X$ is covered by open sets $O_i$, and each set $O_i$ has topological dimension at most $q$, then it follows immediately that $X$ has topological dimension at most $q$.  Since the Lebesgue covering dimension is equivalent to the topological dimension, the Lebesgue covering dimension is also local.  Finally, $X$ has Lebesgue covering dimension at most $q$ if and only if $UW_q(X) = 0$.  

We consider a more quantitative version of this result for Riemannian manifolds:

\begin{ques} \label{locury} Is there a constant $\epsilon(q,n) > 0$ so that the following holds: if $(M^n, g)$ is a closed Riemannian manifold, and every unit ball in $(M, g)$ has Uryson $q$-width at most $\epsilon(q,n)$, then $(M^n, g)$ has Uryson $q$-width at most 1? 
\end{ques}

This question is related to the locality of dimension theorem in exactly the same way that Theorem \ref{main} is related to Szpilrajn's theorem.  But the answers turn out to be different.  At least when $n=3$ and $q=2$, the answer to question \ref{locury} is no because of the following counterexample:

\begin{prop} \label{counterex} For any $\epsilon > 0$, there is a metric $g_\epsilon$ on $S^3$ so that
every unit ball in $(S^3, g_\epsilon)$ has Uryson 2-width $< \epsilon$ and yet the whole manifold $(S^3, g_\epsilon)$
has Uryson 2-width at least 1.
\end{prop}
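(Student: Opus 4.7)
The plan is to construct $g_\epsilon$ on $S^3$ so that the manifold is collapsed at scale $1$ along a direction that varies from point to point, while the global structure retains genuine $3$-dimensionality. I would start with a metric $g_0$ on $S^3$ of bounded sectional curvature and total volume of order $1$, and cover $S^3$ by a finite collection of balls $\{B_j\}$ of radius $r>1$. Inside each $B_j$, choose a smooth unit vector field $X_j$ (a ``thin direction'') and modify $g_0$ in $B_j$ by scaling lengths along $X_j$ by a factor of $\epsilon$. The choices of $X_j$ should be made so that on overlaps $B_i \cap B_j$ the fields $X_i$ and $X_j$ nearly agree (permitting a smooth interpolation), while globally the $X_j$'s do not extend to any smooth unit vector field on $S^3$ whose integral curves close up at uniformly bounded $g_\epsilon$-length — so that no global Seifert-like fibration of $(S^3, g_\epsilon)$ by short fibers exists.

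For the local estimate, a unit $g_\epsilon$-ball $B(p,1)$ lies essentially inside a single $B_j$ (with only interpolation to worry about on a thin collar). In $B_j$, projection onto a smooth $2$-dimensional transversal to $X_j$ has fibers that are arcs along $X_j$, each of $g_\epsilon$-length at most $O(\epsilon)$ and hence of $g_\epsilon$-diameter at most $O(\epsilon)$. The target of this projection is a $2$-dimensional slab, so $UW_2(B(p,1)) < \epsilon$ once the constants are tuned. Near each overlap $B_i \cap B_j$ one smoothly interpolates between the two local projections; because the $X_i, X_j$ agree up to small error there, the interpolated map still has small fibers.

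For the global lower bound $UW_2(S^3, g_\epsilon) \ge 1$, suppose for contradiction that $\phi: S^3 \to K$ is a continuous map to a $2$-complex with every fiber of $g_\epsilon$-diameter $< 1$. The fibers of such a $\phi$, being generically $1$-dimensional in a $3$-manifold, determine an approximate tangent line field on a large subset of $S^3$. The plan is to compare this line field with the field of $X_j$'s chosen in the construction, and to argue that if $\phi$-fibers are transverse to the $X_j$'s then $\phi$ must cover far too much $2$-area in $K$ relative to the total volume of $(S^3, g_\epsilon)$, while if the $\phi$-fibers are nearly tangent to the $X_j$'s then the $X_j$'s must integrate to a global closed-leaf foliation, contradicting the choice made in the construction.

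The main obstacle is precisely this global lower bound: making the ``$\phi$-fibers recover a global $1$-dimensional foliation'' argument quantitative. I expect this step will either require a topological invariant of the $X_j$-field (analogous to a Seifert/Euler invariant detecting that no global circle fibration of $(S^3, g_\epsilon)$ with uniformly short fibers exists), or a direct coarea-style computation bounding $\Vol_2(\phi(S^3))$ from below by $\Vol_3(S^3, g_\epsilon)/\max(\text{fiber diameter})$. Arranging the construction so that this obstruction is genuinely present — while preserving the clean local estimate — is the heart of the argument, and is the reason such a counterexample is special to the setting where $n=3$ and $q=2$.
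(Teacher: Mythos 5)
The heart of the statement is the global lower bound $UW_2(S^3,g_\epsilon)\ge 1$, and your proposal does not actually contain a proof of it --- you acknowledge this yourself, and the sketch you offer would not close the gap. The target $K$ of a hypothetical map $\phi$ with fibers of diameter $<1$ is an abstract $2$-complex carrying no metric, so there is no $\Vol_2(\phi(S^3))$ to bound from below by a coarea argument; and the fibers of a merely continuous map to a $2$-complex need not be $1$-dimensional curves, so they do not determine an approximate line field to compare with your $X_j$'s. Even granting such a comparison, the dichotomy you propose is not exhaustive: small Uryson width does not require the fibers to follow any foliation, and conversely the non-existence of a fibration of $(S^3,g_\epsilon)$ by uniformly short closed curves does not by itself force $UW_2\ge1$. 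There is also no topological obstruction of the kind you hope to exploit: $S^3$ is parallelizable, so global line fields exist in abundance, and your own requirement that the $X_j$ nearly agree on overlaps of a cover by balls of radius $r>1$ essentially produces a global field. Separately, the local estimate has a flaw: after scaling lengths along $X_j$ by $\epsilon$, a unit $g_\epsilon$-ball reaches $g_0$-distance about $1/\epsilon$ in the thin direction, so it is not contained in a single $B_j$ of fixed $g_0$-radius $r$; repairing this forces coherence of the thin direction over length $\sim 1/\epsilon$, which is exactly the tension that tends to collapse the global width as well.

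For comparison, the paper's construction (following Gromov's Example $(H_1'')$) sidesteps all of this. Take a fine triangulation of the round sphere $(S^3,g_0)$ with edges of length $\le\delta$, let $U_1$ be a thin neighborhood of the $1$-skeleton $K_1$, let $U_2$ be the complementary region (which retracts onto the dual $1$-skeleton $K_2$), and insert a long product cylinder $\Sigma\times[0,10]$ along $\Sigma=\partial U_1$. The global lower bound is then immediate: the new metric dominates the round metric up to a $(1+\delta)$ factor, so any map with fibers of small $g$-diameter has fibers of small $g_0$-diameter, and the Lebesgue covering lemma gives $UW_2\gtrsim 1$. The local upper bound holds because the neck has length $10$, so every unit ball misses $U_1$ or $U_2$, and $X\setminus U_i$ admits a map with fibers of diameter $\lesssim\delta$ onto the $2$-complex $K_{3-i}\times[0,10]$ built from the retraction of $U_{3-i}$ onto its graph skeleton. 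If you want to pursue a collapsing-type example instead, the missing ingredient is precisely a robust lower-bound mechanism for $UW_2$ of the collapsed metric, and nothing in your outline currently supplies it.
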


The main idea of this counterexample comes from \cite{Gr2}, Example $(H_1'')$.

So from a quantitative point of view, the Szpilrajn theorem is more robust than the locality theorem.

\subsection{Outline of the paper}

In Section 1 of the paper, we prove Theorem \ref{main} using the pushout lemma for small surfaces.  The proof closely follows the arguments from \cite{Gu2}, and we use some lemmas from \cite{Gu2}.  In Sections 2 and 3, we prove the pushout lemma for small surfaces.  Section 2 introduces a key tool, a variation on the isoperimetric inequality which we call an isoperimetric extension lemma.  Section 3 uses the extension lemma to prove Lemma \ref{pushoutlemma}.  In Section 4, we prove Proposition \ref{counterex}.  In Section 5 we discuss a few open questions, and in Section 6, we discuss complete manifolds and manifolds with boundary.

\section{The proof of the main theorem}

The proof of the main theorem is closely based on the argument in \cite{Gu2}.  The new ingredient is the pushout lemma for small surfaces, which we will prove in Section 3 below.  

Suppose $(X^n, g)$ is a closed Riemannian manifold of dimension $n$.  We 
assume that each unit ball in $(X^n, g)$ has volume $< \epsilon(n)$, a sufficiently small number that
we can choose later.  We have to construct a continuous map $\pi: X \rightarrow Y^{n-1}$ to
a polyhedron of dimension $n-1$ so that each fiber $\pi^{-1}(y)$ has diameter at most 1.

Suppose that $B_i$ in $X$, $i=1, ..., D$, so that $\frac{1}{2} B_i$ cover $X$ and so that each ball $B_i$ has radius $< (1/100)$.  We will choose a particular set of balls $B_i$ later, obeying some additional geometric estimates.  Next we build a map to the ``rectangular nerve" of this cover.

The rectangular nerve of the covering $\{ B_i \}$ is defined as follows.  We begin with the rectangle $\prod_{i=1}^D [0, r_i]$
where $r_i$ is the radius of the ball $B_i$.  We let $\phi_i$, $i = 1, ..., D$ be the coordinate functions on this
rectangle.  The rectangular nerve is a closed sub-complex of this product.
An open face $F$ of the rectangle is determined by dividing the dimensions $1, ..., D$ into three
sets: $I_0, I_1,$ and $I_{(0,1)}$.  The open face $F$ is defined by the equations $\phi_i = 0$ if
$i \in I_0$, $\phi_i = r_i$ if $i \in I_1$, and $0 < \phi_i < r_i$ if $i \in I_{(0,1)}$. We let $I_+$ be the
union of $I_1$ and $I_{(0,1)}$.  Now an open face $F$ is contained in the rectangular nerve $N$
iff the set $I_1(F)$ is not empty, and the intersection $\cap_{i \in I_+(F)} B_i$ is not empty.

As with the regular nerve, we can construct a naturally defined map from $X$ to the rectanglar nerve.  We call
the map $\phi: X \rightarrow N$.   To define $\phi$, we let $\phi_i$ be a real-valued function supported 
on $B_i$ with $\phi_i(x) = r_i$ for $x \in \frac{1}{2} B_i$ and $\phi_i(x)$ decreasing 
to zero as $x$ approaches $\partial B_i$.  We can choose $\phi_i$ to be piecewise smooth with values in 
$[0, r_i]$, and with Lipschitz constant less than 3.  Taking $\phi_i$ as coordinates, we get a map $\phi: X \rightarrow \prod_{i=1}^D [0, r_i]$,
and the image of $\phi$ lies in the rectangular nerve $N$.  To see this, suppose that $\phi(x)$ lies in an open
face $F$.  Because $\frac{1}{2} B_i$ cover $M$, $\phi_i(x) = r_i$ for some $i$, and so $I_1(F)$ is not empty.
Since $\phi_i$ is supported in $B_i$, $x$ lies in $\cap_{i \in I_+(F)} B_i$, which must be non-empty.

The polyhedron $Y$ will be the (n-1)-skeleton of the rectangular nerve $N$.  To construct $\pi: X \rightarrow Y$,
we will homotope $\phi$ until its image lands in $Y$.

We say that a map $\psi: X \rightarrow N$ is subordinate to our covering if the $i^{th}$ component, $\psi_i$, is supported
in $B_i$ for each $i = 1, ..., D$.   (Since $N \subset \prod_{i=1}^D [0, r_i]$, we can speak of the $i^{th}$ coordinate of $\psi$.)  By construction our map $\phi$ is subordinate to the cover.  The map $\pi: X
\rightarrow Y \subset N$ will also be subordinate to the covering.  If $\psi: X \rightarrow N$ is subordinate to the
covering, then any inverse image $\psi^{-1}(y)$ is contained in some ball $B_i$ of radius $< 1/ 100$.  So any
fiber of $\psi$ has diameter $\le 1/50 < 1$.

The strategy for constructing $\pi$ is based on the proof of Szpilrajn's theorem.  We let $\phi^{(D)}$ denote
our original map $\phi$, and we construct a sequence of homotopies $\phi^{(D)} \sim \phi^{(D-1)} \sim
... \sim \phi^{(n)} \sim \phi^{(n-1)}$.  Each map $\phi^{(k)}$ maps $X$ to the k-skeleton of $N$.  And each of
these maps is subordinate to the covering.  The map $\phi^{(n-1)}$ is our desired map $\pi$, and to prove
our theorem, we only need to construct a map $\phi^{(n-1)}$ from $X$ to the $(n-1)$-skeleton of $N$ which is subordinate to the cover.

Since we will need to check that various maps are subordinate to the cover, the following simple observation is useful.

\begin{lemma} \label{subcheck} Suppose that $\Phi_1: X \rightarrow N$ is subordinate to the cover, and that $\Phi_2: X
\rightarrow N$ is another map.  Suppose that for each $x \in X$, if $\Phi_1(x)$ lies in an open face $F \subset
N$, then $\Phi_2(x)$ lies in its closure $\bar F$.  Then $\Phi_2$ is also subordinate to the cover.
\end{lemma}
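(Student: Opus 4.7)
The plan is to unpack the definitions and trace coordinate vanishing through the face structure of the rectangular nerve. Being subordinate to the cover means precisely that for each index $i \in \{1, \ldots, D\}$, the $i$-th coordinate function $(\Phi_2)_i$ vanishes on the complement of $B_i$. So I would fix an index $i$ and a point $x \in X$ with $x \notin B_i$, and aim to show $(\Phi_2)_i(x) = 0$.

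Since $\Phi_1$ is subordinate to the cover, $(\Phi_1)_i(x) = 0$. Let $F$ be the unique open face of $N$ in which $\Phi_1(x)$ lies. By the definition of the decomposition of the rectangle into open faces, the coordinate $\phi_i$ is constant on $F$, and since it takes the value $0$ at the point $\Phi_1(x) \in F$, the index $i$ must lie in $I_0(F)$. The key observation is that the condition "$\phi_i \equiv 0$" is closed: the face $F$ is cut out of the rectangle by the equations $\phi_j = 0$ for $j \in I_0(F)$, $\phi_j = r_j$ for $j \in I_1(F)$, and the open conditions $0 < \phi_j < r_j$ for $j \in I_{(0,1)}(F)$, so $\bar F$ satisfies all the same closed equations, in particular $\phi_i \equiv 0$. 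By hypothesis $\Phi_2(x) \in \bar F$, so $(\Phi_2)_i(x) = 0$.

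Since $i$ and $x \notin B_i$ were arbitrary, each component $(\Phi_2)_i$ is supported in $B_i$, i.e.\ $\Phi_2$ is subordinate to the cover. There is no real obstacle here; the lemma is a direct consequence of the fact that the vanishing loci of the coordinate functions are unions of closed faces of the rectangular nerve, so the condition of being supported in a prescribed ball is preserved whenever one moves a point inside the closure of its face.
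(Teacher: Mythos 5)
Your proof is correct and is essentially the paper's argument run in the contrapositive: the paper picks $x$ with $\Phi_{2,i}(x) \neq 0$ and uses that $F \subset \bar F_1$ forces $I_+(F) \subset I_+(F_1)$, while you pick $x \notin B_i$ and use that $\phi_i \equiv 0$ on $\bar F$ once $i \in I_0(F)$ --- the same face-closure combinatorics. One minor wording slip: $\phi_i$ is not constant on $F$ when $i \in I_{(0,1)}(F)$, but your conclusion that $i \in I_0(F)$ still follows immediately, since $\phi_i$ is strictly positive at every point of $F$ in the other two cases.
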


\begin{proof} Pick an index $i$.  We want to show that $i^{th}$ coordinate of $\Phi_2$ is supported in 
$B_i$.  Pick a point $x \in X$ with $\Phi_{2,i} (x) \not= 0$.  The image $\Phi_2(x)$ lies in an open
face $F$ with $i \in I_+(F)$.  Let us say that $\Phi_1(x)$ lies in the open face $F_1$.  By our hypothesis
$F \subset \bar F_1$.  Hence $I_+(F) \subset I_+(F_1)$.  In particular, $i \in I_+(F_1)$, and so 
$\Phi_{1,i}(x) > 0$.  Since $\Phi_1$ is subordinate to the cover, $x \in B_i$.  \end{proof}

This lemma applies to the push-out type construction that we used in Section 1 to prove Szpilrajn's
theorem.  In particular, we get the following lemma.

\begin{lemma} \label{easypush} Suppose that $\phi^{(k)}: X \rightarrow N^{(k)}$ is a map from $X$ to the k-skeleton of
$N$ subordinate to the cover.  Suppose that for each k-face $F \subset N$, the image $\phi^{(k)}(X)
\cap F$ is not the entire k-face.  Then we can homotope $\phi^{(k)}$ to a map $\phi^{(k-1)}: X
\rightarrow N^{(k-1)}$ subordinate to the cover.
\end{lemma}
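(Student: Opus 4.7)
The plan is to repeat the push-out step from the Szpilrajn argument sketched earlier in the introduction, and then to invoke Lemma \ref{subcheck} to see that subordination is preserved. Concretely, I would proceed as follows.

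First, for each open $k$-face $F$ of $N$, use the hypothesis $\phi^{(k)}(X) \cap F \subsetneq F$ to pick a point $y_F \in F \setminus \phi^{(k)}(X)$. Set $\tilde N^{(k)} := N^{(k)} \setminus \{ y_F : F \text{ a } k\text{-face}\}$. Define a retraction $R_k : \tilde N^{(k)} \to N^{(k-1)}$ by the identity on $N^{(k-1)}$ and, on each punctured closed $k$-face $\bar F \setminus \{y_F\}$, by radial projection away from $y_F$ onto $\partial F$. Because radial projection from $y_F$ is already the identity on $\partial F$, the two definitions agree on the overlap $\partial F \subset N^{(k-1)}$, and $R_k$ is continuous. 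Since $\phi^{(k)}(X) \subset \tilde N^{(k)}$ by choice of the $y_F$, the composition $\phi^{(k-1)} := R_k \circ \phi^{(k)}$ is a well-defined continuous map from $X$ into $N^{(k-1)}$.

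To produce an explicit homotopy from $\phi^{(k)}$ to $\phi^{(k-1)}$, I would use the straight-line homotopy inside each closed $k$-face, linearly interpolating along the segment from $z \in \bar F \setminus \{y_F\}$ to the boundary point $R_k(z)$; on $N^{(k-1)}$ the interpolation is trivial, so continuity across the $(k-1)$-skeleton is automatic. Composing this homotopy of the target with $\phi^{(k)}$ yields the desired homotopy of maps $X \to N^{(k)}$, with $\phi^{(k)}$ at time $0$ and $\phi^{(k-1)}$ at time $1$.

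Finally, I would verify subordination via Lemma \ref{subcheck}. If $\phi^{(k)}(x)$ lies in an open face $F'$ of $N^{(k)}$, then either $\dim F' \le k-1$ and $R_k$ fixes $\phi^{(k)}(x)$, so $\phi^{(k-1)}(x) \in F' \subset \bar{F'}$; or $F'$ is an open $k$-face, in which case $R_k(\phi^{(k)}(x)) \in \partial F' \subset \bar{F'}$. In either case the hypothesis of Lemma \ref{subcheck} is satisfied, so $\phi^{(k-1)}$ is subordinate to the cover. There is no real obstacle here: the only substantive point is that the hypothesis ``$\phi^{(k)}(X) \cap F$ is not the entire $k$-face'' is exactly what is needed to define the radial retraction. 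Everything else is routine simplicial topology, and in particular nothing quantitative (no volume control) is involved at this stage—the volume bookkeeping enters only in the uses of Lemma \ref{pushoutlemma} that replace this easy push-out at the harder steps of the main argument.
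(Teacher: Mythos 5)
Your proposal is correct and follows essentially the same route as the paper: choose a missed point $y_F$ in each $k$-face, radially retract the punctured $k$-skeleton onto $N^{(k-1)}$, set $\phi^{(k-1)} = R_k \circ \phi^{(k)}$, and deduce subordination from Lemma \ref{subcheck} because the retraction maps each face into its closure. The only difference is that you spell out the straight-line homotopy explicitly, which the paper leaves implicit.
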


\begin{proof}  For each k-face, $F_j \subset N$, pick a point $y_j$ which lies in $F_j$ but does not lie
in $\phi^{(k)}(X)$.  We let $N'$ denote the k-skeleton of $N$ take away the points $y_j$ we just chose.
So $\phi^{(k)}$ maps $X$ into $N'$.  Now we let $\Psi$ be a retraction from $N'$ to $N^{(k-1)}$, constructed
by radially pushing $F_j \setminus \{ y_j \}$ into $\partial F_j$.  We define $\phi^{(k-1)}$ to be
$\Psi \circ \phi^{(k)}: X \rightarrow N^{(k-1)}$.  Since $\Psi$ maps each face $F_j$ into $\bar F_j$, Lemma \ref{subcheck} implies that
$\phi^{(k-1)}$ is subordinate to the cover. \end{proof}

Using this lemma, we can construct a sequence of homotopies subordinate to the cover, $\phi^{(D)}
\sim ... \sim \phi^{(n)}$.  As long as $k > n$, any piecewise smooth map $\phi^{(k)}: X
\rightarrow N^{(k)}$ will not contain any k-face in its image, and so we can apply Lemma \ref{easypush}.
The real obstruction comes when we try to homotope $\phi^{(n)}$ to $\phi^{(n-1)}$.

In the case of the Spzilrajn theorem, we knew that $\phi^{(n)}(X)$ has n-dimensional Hausdorff measure 0.
Hence $\phi^{(n)}(X)$ does not cover any n-dimensional face $F \subset N$, and we can Lemma \ref{easypush}.  In our case, we will prove the following volume estimate.

$$ \textrm{For each n-face} \hskip6pt F \subset N, \Vol_n \phi^{(n)}(X) \cap F < \Vol_n F. \eqno{(*)}$$

(The rectangular nerve has a metric given by the restriction the Euclidean metric on the rectangle $\prod_{i=1}^D [0, r_i]$.)

Given this key estimate $(*)$, we can apply Lemma \ref{easypush} again to homotope $\phi^{(n)}$ to $\phi^{(n-1)}$ subordinate to the cover.  This will prove the main theorem.

Establishing the volume estimate $(*)$ takes some care.  Since $\phi^{(n)}$ is subordinate to our
covering, the preimage of any face $F$ lies one of our balls $B_i$.  The ball $B_i$ has volume at most $\epsilon$,
which we can choose very small.  But the initial map $\phi = \phi^{(D)}$ may stretch this volume.
Then each homotopy from $\phi^{(k)}$ to $\phi^{(k-1)}$ may stretch it further.  After all this potential
stretching, we need a bound for the volume of $\phi^{(n)}(X) \cap F$.  To get such a bound, we need to choose
the covering carefully, in order to control the volume of $\phi^{(D)}(X) \cap F$.  Then we need to choose our homotopies
from $\phi^{(k)}$ to $\phi^{(k-1)}$ carefully, in order to inductively control the volume of each $\phi^{(k-1)}(X) \cap F$.

We need a little bit of vocabulary in order to state our estimates.  Given an open face $F \subset N$, we let
$\Star(F)$ denote the union of $F$ and all open faces $F'$ such that $F \subset \bar F'$.  If $F$ has dimension
$k$, then each open face in $F' \subset \Star(F)$ has dimension $\ge k$, and the only k-face in $\Star(F)$
is $F$.  We let $r_1(F)$ denote the shortest length of any of the sides of $F$.

The paper \cite{Gu2} constructs a good covering $\{ B_i \}$ and proves the estimates that we need about $\phi = \phi^{(D)}$.  In particular, we use Lemma 5 from \cite{Gu2}:

\begin{lemma} \label{Gu2lemma} (Lemma 5 in \cite{Gu2}.)  There are constants $C(n)$ and $\beta(n) > 0$ depending only on $n$ so that the following
estimate holds.  Let $\epsilon > 0$ be any number.  Suppose that $(X^n, g)$ is a closed Riemannian manifold of dimension $n$, and that each unit ball in $(X^n, g)$ has volume $< \epsilon$.  Then there is a covering $B_i$ as above and a map $\phi^{(D)}: X \rightarrow N$ subordinate to the cover so that that following volume estimate holds:

For any face $F \subset N$ of dimension $d(F)$,

\begin{equation} \label{volestD} \Vol_n [ \phi^{(D)}(X) \cap \Star(F) ] \le C \epsilon r_1(F)^{n+1} e^{- \beta d(F)} . \end{equation}

\end{lemma}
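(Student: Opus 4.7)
The plan is to construct the balls $\{B_i\}$ via a multi-scale Vitali-type selection procedure indexed by dyadic radii $r^{(k)} = 2^{-k}$, exploiting the hypothesis that every unit ball has volume at most $\epsilon$. At scale $k$ one selects a maximal packing of radius-$r^{(k)}$ balls whose half-radius subballs are disjoint from those chosen at coarser scales, such that together with earlier selections the half-balls cover all of $X$. Taking the coordinate functions $\phi_i$ to be radial bump functions as described in the main text (equal to $r_i$ on $\tfrac{1}{2}B_i$, supported in $B_i$, Lipschitz constant at most $3$) then produces a map $\phi = \phi^{(D)}$ subordinate to the cover.

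The key quantitative property to extract is a bound on overlaps: at every point $p \in X$, the number of selected balls containing $p$ with radii in a fixed dyadic window $[r, 2r]$ is at most a constant $C_1(n)$. This follows from the local volume bound $\Vol(B(p,r)) \le C(n)\, \epsilon\, r^n$ (which can be deduced from the hypothesis by covering a radius-one ball by $\sim r^{-n}$ balls of radius $r$), combined with the half-ball disjointness that holds within each scale. Consequently, if $m$ selected balls all contain $p$, their radii span at least $m/C_1(n)$ dyadic scales, so $r_{\min}/r_{\max} \le 2^{-m/C_1(n)}$. Since any face $F$ of dimension $d(F)$ comes with $|I_+(F)| \ge d(F)+1$ balls meeting in $\cap_{i \in I_+(F)} B_i$, this geometric decay is the source of the factor $e^{-\beta d(F)}$ with $\beta = (\log 2)/C_1(n)$.

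To estimate $\Vol_n[\phi(X) \cap \Star(F)]$ itself, I would observe that the preimage $\phi^{-1}(\Star(F))$ lies in $\bigcap_{i \in I_+(F)} B_i$, hence inside the smallest ball among these; its volume is therefore at most $C(n)\, \epsilon\, r_{\min}^{\,n}$ times the overlap decay. The area formula applied to $\phi$, whose Jacobian is bounded by $3^n$, converts this into a bound of the form $C(n)\, \epsilon\, r_{\min}^{\,n}\, e^{-\beta d(F)}$. To upgrade $r_{\min}^{\,n}$ to $r_1(F)^{n+1}$, I would perform a coarea decomposition of $\phi$ along the coordinate direction corresponding to the shortest side of $F$: along this direction the image is confined to a strip of width $r_1(F)$, and the slicing picks up one additional factor of $r_1(F)$ beyond what the naive area-formula estimate yields.

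The main obstacle is precisely this extra factor of $r_1(F)$. It requires showing that along the direction associated with the shortest side of $F$, the map $\phi$ effectively compresses volume by a factor of $r_1(F)$, essentially because $\phi_i$ for that index varies only through an interval of length $r_1(F)$ on the relevant preimage. Making this geometric step quantitative while simultaneously keeping track of the multiplicity decay and the Lipschitz stretching is the delicate content of Lemma 5 of \cite{Gu2}; the other ingredients are standard packing estimates and the area formula with careful tracking of $n$-dependent constants.
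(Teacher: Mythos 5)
The paper does not actually prove this statement---it is quoted as Lemma 5 of \cite{Gu2}---so the relevant comparison is with the covering construction carried out there, and your reconstruction has a genuine gap at its foundation. You deduce an all-scales bound $\Vol (B(p,r)) \le C(n)\,\epsilon\, r^n$ from the unit-ball hypothesis ``by covering a radius-one ball by $\sim r^{-n}$ balls of radius $r$.'' That argument runs in the wrong direction: covering a unit ball by small balls bounds $\Vol(B(p,1))$ from above by the number of small balls times the \emph{largest} small-ball volume, i.e.\ it gives a lower bound on $\sup_q \Vol(B(q,r))$, not an upper bound on every $\Vol(B(p,r))$ (and the covering count $\sim r^{-n}$ is itself unjustified without doubling). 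Moreover the claimed bound is simply false under the hypothesis: a closed manifold containing a small ``bulb'' of diameter $\ll 1$ and volume close to $\epsilon$, attached by a thin neck, has every unit ball of volume $<\epsilon$ but has balls of tiny radius $r$ with volume $\sim \epsilon \gg C\epsilon r^n$. Since this false estimate is what powers your bounded-multiplicity-per-dyadic-scale claim and hence the radius decay $2^{-m/C_1(n)}$, the source of the factor $e^{-\beta d(F)}$ in your argument collapses. This is exactly why a uniform dyadic Vitali selection cannot work and why the cover in \cite{Gu2} is chosen adaptively: the radius $r_i$ is essentially a critical radius at which $\Vol(B(p_i,r))$ is comparable to $\epsilon r^{n+1}$, which simultaneously gives the upper bound $\Vol(B_i) \lesssim \epsilon r_i^{n+1}$ (this is where the exponent $n+1$ in (\ref{volestD}) comes from, with no extra slicing step) and a lower bound at the chosen scale; the packing estimates and the exponential decay in $d(F)$ are then driven by the geometric decay of the \emph{volumes} of mutually intersecting balls across scales, not by a fictitious uniform small-ball volume bound.

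Two further points. The step you yourself flag as ``the main obstacle''---upgrading $r_{\min}^{\,n}$ to $r_1(F)^{n+1}$ by a coarea slicing along the shortest-side coordinate---is left entirely unproved in your sketch, whereas in \cite{Gu2} it is not a separate step at all but is built into the adaptive choice of radii just described. Also, invoking the area formula ``with Jacobian bounded by $3^n$'' is not justified: at a point lying in $m$ balls of the cover, $m$ coordinates of $\phi$ have nonvanishing gradient and the $n$-Jacobian of the map into the nerve can be of size $\sim m^{n/2} 3^n$; the estimate in \cite{Gu2} controls image volumes inside high-dimensional faces by decomposing over coordinate $n$-tuples and letting the exponential decay in the face dimension absorb the resulting combinatorial factors. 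So while the overall shape of your sketch (adapted cover, subordinate map, overlap forcing scale separation, decay in face dimension) matches the spirit of \cite{Gu2}, its quantitative core rests on a false volume estimate and two unproved steps.
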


We will construct a sequence of homotopic maps $\phi = \phi^{(D)} \sim
\phi^{(D-1)} \sim ... \sim \phi^{(n)}$ subordinate to the cover.  Moreover, every map $\phi^{(k)}$ will obey the following estimate, slightly weaker than the estimate
that $\phi$ obeys.

$$\Vol_n [\phi^{(k)}(X) \cap \Star(F)] < 2 C \epsilon r_1(F)^n e^{- \beta d(F)}. \eqno{(**)}$$

In particular, for each n-face $F$, $\phi^{(n)}(X) \cap F$ obeys the
following estimate:

$$\Vol_n [\phi^{(n)}(X) \cap F] < 2 C \epsilon r_1(F)^n.$$

We will choose $\epsilon$ less than $(2C)^{-1}$, so we conclude
that $\Vol_n [\phi^{(n)}(X) \cap F] < \Vol_n (F)$, proving $(*)$.  Then by Lemma \ref{easypush}, $\phi^{(n)}$ is homotopic to a
map $\phi^{(n-1)}$ to the $(n-1)$-skeleton of $N$, subordinate to the cover, and this
proves the main theorem.  So it suffices to construct
the maps $\phi^{(k)}$ for each $k = D, D-1, ..., n$, subordinate to the cover and obeying the key estimate $(**)$.

The homotopy from $\phi^{(k)}$ to $\phi^{(k-1)}$ is based on our pushout lemma, Lemma \ref{pushoutlemma}. We recall the statement here.

\newtheorem*{pushout}{Push-out lemma for small surfaces}

\begin{pushout} For each dimension $n \ge 2$, there is a constant $\sigma_n$ so that the following holds.  

Suppose that $X$ is a compact piecewise-smooth $n$-dimensional manifold with boundary.  
Suppose that $K \subset \mathbb{R}^N$ is a convex set, and $\phi: (X, \partial X) \rightarrow (K, \partial K)$ is a piecewise-smooth map.    Let $W = \sigma_n \Vol_n \phi_0(X)^{1/n}$.
Then $\phi$ may be homotoped to a map $\tilde \phi: (X, \partial X) \rightarrow (K, \partial K)$ so that the following holds.

1. The map $\tilde \phi$ agrees with $\phi$ on $\partial X$.

2. $\Vol_n \tilde \phi(X) \le \Vol_n \phi(X)$.

3. The image $\tilde \phi(X)$ lies in the $W$-neighborhood of $\partial K$.

\end{pushout}

We will apply this lemma on each k-face $F_j \subset N^{(k)}$.  We let $K_j \subset F_j$ be a closed convex
subset of $F_j$, containing almost all of $F_j$ but in general position.  We let $X_j$ be the preimage of $K_j$
under $\phi^{(k)}$.  So we have a map $\phi^{(k)}: (X_j,
\partial X_j) \rightarrow (K_j, \partial K_j)$, and we can apply the pushout lemma for small surfaces to this map.
The lemma gives us a new map $\tilde \phi^{(k)}: (X_j, \partial X_j) \rightarrow (K_j, \partial K_j)$ agreeing with
$\phi^{(k)}$ on $\partial X_j$.

We have just defined $\tilde \phi^{(k)}$ on $\cup_j X_j \subset X$.  We extend $\tilde \phi^{(k)}$ to all of $X$ by letting
$\tilde \phi^{(k)} = \phi^{(k)}$ on $X \setminus \cup_j X_j$.  The boundary between $X \setminus \cup_j X_j$ and $\cup X_j$
is $\cup \partial X_j$.  Since $\tilde \phi^{(k)} $ agrees with $\phi^{(k)}$ on each $\partial X_j$, this definition gives a piecewise smooth map
from $X$ to $N^{(k)} \subset N$.  

By induction, we know that 

$$\Vol_n \phi^{(k)}(X_j) \le \Vol_n \phi^{(k)}(X) \cap F_j \le 2 C \epsilon r_1(F_j)^n e^{- \beta k}.$$

\noindent The pushout lemma for small surfaces tells us that $\tilde \phi^{(k)}(X_j)$ lies in the $w_j$-neighborhood of $\partial K_j$ where
$\sigma_n w_j^n = 2 C \epsilon r_1(F_j)^n e^{- \beta k}$.  Since we can choose $K_j$ as close as we like to the whole
face $F_j$, we see that $\tilde \phi^{(k)}(X) \cap F_j$ lies in the $W_j$-neighborhood of $\partial F_j$ for 
$\sigma_n W_j^n = 3 C \epsilon r_1(F_j)^n e^{- \beta k}$.  Rearranging this formula, we
get the following inequality for $W_j$.

$$W_j / r_1(F_j) \le [3 C \sigma_n^{-1} \epsilon
e^{-\beta k}]^{1/n}. \eqno{(1)}$$

If $\phi^{(k)}(x)$ lies in a face $F$, then $\tilde \phi^{(k)} (x)$ lies in the same face $F$, and so $\tilde \phi^{(k)}$ is subordinate
to the cover.

We define a map that pulls a small
neighborhood of the (k-1)-skeleton of $N$ into the
(k-1)-skeleton.  Our map will be called $R_\delta$, and it
depends on a number $\delta$ in the range $0 < \delta < 1/2$. 
The basic map is a map from an interval $[0, r]$ to itself, which
takes the set $[0, \delta r]$ to $0$, and the set $[r- \delta r,
r]$ to $r$, and linearly stretches the set $[\delta r, r - \delta
r]$ to cover $[0, r]$.  The Lipschitz constant of this map is $(1
- 2 \delta)^{-1}$.  Now we apply this map separately to each
coordinate $\phi_i$ of the rectangle $\prod_{i=1}^D [0,r_i]$.  The resulting map
is $R_\delta$.

Our map $\phi^{(k-1)}$ will be $R_{\delta(k)} \circ \tilde \phi^{(k)}$
for a well-chosen $\delta(k)$.  Notice that for any $\delta$, the map
$R_\delta$ sends each open face $F$ into $\bar F$.  By Lemma \ref{subcheck},
$R_\delta \circ \tilde \phi^{(k)}$ is subordinate to the cover.

Also, for sufficiently big $\delta$, $R_\delta \circ \tilde \phi^{(k)}$ maps
$X$ into $N^{(k-1)}$.  In particular, if
$\delta \ge [3 C \sigma_n^{-1} \epsilon e^{-\beta k}]^{1/n}$, then
inequality $(1)$ guarantees that for each k-face $F_j$, $W_j / r_1(F_j) \le \delta$.  
Since $\tilde \phi^{(k)} \cap F_j$ lies in the $W_j$-neighborhood of $\partial F_j$,  
$R_\delta \circ \tilde \phi^{(k)}$ lies in the
(k-1)-skeleton of $N$.  We define $\delta(k) := [3 C \sigma_n^{-1}
\epsilon e^{- \beta k}]^{1/n}$, and then we define $\phi^{(k-1)} =
R_{\delta(k)} \circ \tilde \phi^{(k)}$.

To close the induction, we just need to check the estimate $(**)$ for $\phi^{(k-1)}$.
In other words, we need to prove that

$$\Vol_n [\phi^{(k-1)}(X) \cap \Star(F)] < 2 C \epsilon r_1(F)^n e^{- \beta d(F)}. $$

By Lemma \ref{Gu2lemma}, we know that

$$\Vol_n [\phi^{(D)}(X) \cap \Star(F)] \le C \epsilon r_1(F)^n e^{- \beta d(F)}. $$

We know that the pushout construction does not increase any volumes.  In particular,
for any $k \le l \le D$, we know that in every face $F
\subset N$, $\Vol_n [\tilde \phi^{(l)}(X) \cap F] \le \Vol_n [\phi^{(l)}(X) \cap F]$.  Hence we know that

$$\Vol_n [ \tilde \phi^{(l)}(X) \cap \Star(F)] \le \Vol_n[ \phi^{(l)}(X) \cap \Star(F)]. $$

On the other hand, the map $R_\delta$ does not increase volumes very much
if $\delta$ is small.  The Lipschitz constant of $R_{\delta}$ is $[1 - 2 \delta]^{-1}$.
 Therefore, for any $n$-dimensional surface, $\Sigma$, 
 
$$ \Vol_n [R_\delta(\Sigma)] \le [1 - 2 \delta]^{-n} \Vol_n [\Sigma].$$

Also, we have seen that $R_\delta$ maps each face $F$ into $\bar F$.  Hence
$R_\delta^{-1}(\Star(F)) \subset \Star(F)$.  Therefore we get the following inequality.

$$\Vol_n [  \phi^{(l-1)}(X) \cap \Star(F) ]  \le [1 - 2 \delta(l)]^{-n} \Vol_n[ \tilde \phi^{(l)} (X) \cap \Star(F)] \le $$

$$ \le [1 - 2 \delta(l)]^{-n} \Vol_n [\phi^{(l)} (X) \cap \Star(F)] . $$

By combining these inequalities for all $k \le l \le D$, and using inequality \ref{volestD} for $\phi^{(D)}$, we get
the following inequality.

$$ \Vol_n [\phi^{(k-1)}(X) \cap \Star(F)] \le \prod_{l=k}^D [1 - 2 \delta(l)]^{-n} C \epsilon r_1(F)^n e^{- \beta d(F)}. $$

Therefore, it suffices to prove that $\prod_{l=k}^D [1 - 2 \delta(l)]^{-n} \le \prod_{l=n}^\infty [1 - 2 \delta(l)]^{-n} < 2$.  Recalling
the definition of $\delta(l)$, it suffices to prove that

$$\prod_{l=n}^\infty \left[ 1 - 2 (3 C \sigma_n^{-1} \epsilon e^{- \beta l})^{1/n } \right]^{-n} < 2. $$

This formula is a little messy, but $C, \sigma,$ and $\beta$ are just dimensional constants.  Because of the
exponential decay $e^{- \beta l}$, the infinite product converges.  And we can choose $\epsilon$ sufficiently
small so that the product is less than 2.

This finishes the proof of the main theorem, except for the proof of the pushout lemma for small surfaces.  We
prove the pushout lemma in Section 3, following preliminary work in Section 2.

\section{An isoperimetric extension lemma}

In order to prove the pushout lemma, we need the following version of the isoperimetric inequality.
Suppose that $X$ is a piecewise smooth $n$-dimensional manifold and $F: X \rightarrow \RR^N$ is a piecewise smooth map.  We write $\Vol_n F(X)$ for the volume of $X$ with the induced Riemannian metric given by pullback with $F$.  Equivalently, we can think of $\Vol_n F(X)$ as the volume of the image of $F$ counted with multiplicity.

\begin{lemma} \label{extlemma} (Extension lemma) For each dimension $n \ge 2$, there exists an ``isoperimetric constant" $I(n)$ so that the following holds.

Suppose that $X$ is a compact piecewise-smooth $n$-dimensional manifold with boundary.  If $F_0$ is a map from $\partial X$ to $\RR^N$, 
then there is an extension $F: X \rightarrow \RR^N$ so that 

$$\Vol_n F(X) \le I(n) [ \Vol_{n-1} F_0(\partial X) ]^{\frac{n}{n-1}}. $$

\end{lemma}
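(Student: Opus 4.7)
My plan is to mimic Wenger's iterative proof of Gromov's filling volume inequality \cite{W}, adapted from cycles to maps from manifolds. I would induct on the dimension $n$, taking as base case $n = 2$ the classical planar isoperimetric inequality applied component-by-component to the boundary circles.

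First I would reduce to the case $X = C(\partial X)$, the cone on the boundary. Every compact piecewise smooth manifold with boundary admits a piecewise smooth surjection $\rho \colon X \to C(\partial X)$ that stretches a collar $\partial X \times [0, \epsilon]$ onto $\partial X \times [0, 1]$ and crushes the rest of $X$ to the apex; this $\rho$ is the identity on $\partial X$. Hence it suffices to construct $G \colon C(\partial X) \to \RR^N$ extending $F_0$ with the claimed bound, and take $F := G \circ \rho$.

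For the inductive step, set $M = \Vol_{n-1} F_0(\partial X)$. The key geometric input is a slicing lemma of Wenger type: by a coarea-averaging argument over basepoints and radii, find $p \in \RR^N$ and $r \le C_1(n) M^{1/(n-1)}$ such that $Y := F_0^{-1}(B(p, r)) \subset \partial X$ is a piecewise smooth $(n-1)$-submanifold with (i) inside mass $\Vol_{n-1} F_0(Y) \ge (1 - \eta) M$ and (ii) slice $(n-2)$-volume $\Vol_{n-2} F_0(\partial Y) \le C_2(n) M^{(n-2)/(n-1)}$, where $\eta$ and $C_2$ can be made as small as desired by taking $C_1$ large. Now apply the induction hypothesis in dimension $n - 1$ to $F_0|_{\partial Y}$ to extend it to a piecewise smooth map $F_0' \colon W \to \RR^N$, where $W$ is an $(n-1)$-manifold with $\partial W = \partial Y$ and $\Vol_{n-1} F_0'(W) \le I(n-1) C_2^{(n-1)/(n-2)} M$. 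Gluing $Y$ and $W$ along $\partial Y$ yields a closed $(n-1)$-manifold; coning the resulting map from $p$ gives an $n$-chain of $n$-volume at most $(r/n)(1 - \eta + I(n-1) C_2^{(n-1)/(n-2)}) M \le C_4(n) M^{n/(n-1)}$. Realize this as part of $G$ using the cone structure of $C(\partial X)$, with $W$ attached via an auxiliary cap near the apex.

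Iterate on the outside piece $F_0|_{\partial X \setminus Y}$, capped by $-W$. This is a map from a closed $(n-1)$-manifold of total mass at most $(\eta + I(n-1) C_2^{(n-1)/(n-2)}) M \le M/2$ once $C_1$ is chosen large. Successive iterations produce masses $M_k \le 2^{-k} M$ and cone contributions at most $C_4 M_k^{n/(n-1)}$; the total is a geometric series bounded by $I(n) M^{n/(n-1)}$ with $I(n) := C_4(n) / (1 - 2^{-n/(n-1)})$. We stop after finitely many iterations, once $M_k$ is small enough to finish with a cheap direct cone, to keep the map piecewise smooth. The main obstacle is the balancing in the slicing lemma, tuning $C_1$ so that simultaneously the inside captures almost all the mass, the slice has small $(n-2)$-volume so that its inductive filling is subdominant, and the cone from $p$ at scale $r \lesssim M^{1/(n-1)}$ has $n$-volume comparable to $M^{n/(n-1)}$; this balancing relies essentially on the sharp coarea exponent $(n-2)/(n-1)$.
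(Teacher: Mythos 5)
Your overall architecture (induction on $n$, coarea slicing, filling slices by the inductive hypothesis, coning at scale comparable to the mass, geometric decay of the remaining boundary mass) is the right family of ideas, and it is indeed the Wenger-style scheme the paper follows. But the specific slicing lemma you rely on is false, and the argument collapses at that point. You claim that for any $F_0$ with $M = \Vol_{n-1}F_0(\partial X)$ there exist a \emph{single} center $p$ and radius $r \le C_1(n)M^{1/(n-1)}$ such that $B(p,r)$ captures $(1-\eta)M$ of the boundary mass (with $\eta$ small). There is no such ball in general: take the image to consist of $k$ unit $(n-1)$-spheres (or lumps joined by negligible thin tubes, to keep $\partial X$ connected) placed at mutual distances much larger than $C_1 M^{1/(n-1)}$, with $M \sim k$. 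Any ball of radius $C_1M^{1/(n-1)}$ then contains at most a $1/k$ fraction of the mass, so you cannot capture $1-\eta$, nor even a fixed fraction, at the global scale $M^{1/(n-1)}$. Enlarging $r$ to fix this destroys the cone bound, since the cone cost is $\sim r\cdot M$ and must come out as $M^{n/(n-1)}$. This is exactly why Wenger's argument (and the paper's) does not use one ball at the global scale: it uses a Vitali family of \emph{disjoint} ``good balls'' $B(p_i,r_i)$ whose radii are tied to the local density, chosen so that $\Vol_{n-1}[F_0(\partial X)\cap B_i]\gtrsim \epsilon_n r_i^{n-1}$, the $5r_i$-ball carries $\lesssim \epsilon_n r_i^{n-1}$, and the sphere slice carries $\lesssim \epsilon_n r_i^{n-2}$. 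Together these balls capture a definite fraction of the mass, each slice is filled by the $(n-1)$-dimensional lemma at cost $\lesssim \epsilon_n^{(n-1)/(n-2)} r_i^{n-1}$ (beating the local mass for $\epsilon_n$ small), each ball is filled by a cone at cost $\lesssim r_i^{n}\sim(\text{local mass})^{n/(n-1)}$, and the sum is controlled by $\sum_i a_i^{n/(n-1)}\le(\sum_i a_i)^{n/(n-1)}$. The iteration then reduces the boundary mass by a fixed factor $(1-\delta_n)$ per step, and a final cone in a large ball finishes after finitely many steps.

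A secondary, fixable issue: your bookkeeping for staying in the category of maps from manifolds (the cap $W$, the ``auxiliary cap near the apex'' of $C(\partial X)$, and iterating on $(\partial X\setminus Y)\cup W$ as if it were again a boundary) is left vague. The paper avoids this by first extending $F_0$ generically to all of $X$, then carving $X$ along preimages of the good spheres, so that each stage is literally a decomposition $X = X_1\cup G_1$ of the original manifold and the new boundary $\partial X_1$ is an honest boundary on which the next step acts. If you repair the slicing step as above, you should also adopt some device of this kind so that every stage remains an extension problem on a genuine submanifold of $X$.
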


This lemma can be thought of as a (minor) generalization of the isoperimetric inequality.  For any
integral cycle $y^{n-1} \subset \mathbb{R}^N$, the Michael-Simon isoperimetric inequality says that there is a chain $x^n$ with $\partial x = y$ and $Vol_n(x) \le C_n Vol_{n-1}(y)^\frac{n}{n-1}$ \cite{MS}.  Our extension lemma is a version of this inequality for maps instead of chains.  

Recently, Wenger gave a short constructive proof of the Michael-Simon isoperimetric inequality \cite{W}.  
Our proof of the extension lemma closely follows Wenger's proof.  (Wenger's proof works more generally in Banach spaces, and our extension lemma also generalizes to Banach spaces, but we don't pursue it here.)

\begin{proof}[Proof of extension lemma]  We write $A \lesssim B$ if $A \le C(n) B$.  The constants do not depend on the ambient dimension $N$.

We begin with a cone-type inequality which allows us to construct extensions when the diameter of $F_0(\partial X)$ is not too big.

\begin{lemma} \label{coneineq} (Cone inequality) 
Suppose that $X$ is a compact piecewise-smooth $n$-dimensional manifold with boundary.  If $F_0$ is a map from $\partial X$ to ball of radius $R$, $B^N(R) \subset \RR^N$, then there is an extension $F: X \rightarrow B^N(R)$ so that 

$$\Vol_n F(X) \le C_n R [ \Vol_{n-1} F_0(\partial X) ]. $$

\end{lemma}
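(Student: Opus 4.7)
The plan is to construct $F$ as a truncated cone from a center point in $B^N(R)$. After translating, I may assume $B^N(R)$ is centered at the origin; by convexity, the straight-line segments from $0$ to points of $F_0(\partial X)$ all lie in $B^N(R)$, so coning from $0$ is safe. By the collar neighborhood theorem for piecewise-smooth manifolds with boundary, I would fix an embedding $\iota: \partial X \times [0,1] \hookrightarrow X$ with $\iota(y,0) = y$, and write $A$ for its image. Then I would define
\[
F(\iota(y,t)) = (1-t) F_0(y), \qquad F(x) = 0 \ \text{for} \ x \in X \setminus A.
\]
The two definitions agree on $\iota(\partial X \times \{1\})$, so $F$ is continuous and piecewise smooth, extends $F_0$, and takes values in $B^N(R)$.

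Since $F$ is constant on $X \setminus A$, all of the $n$-volume of $F(X)$ comes from the collar. In coordinates $(y,t)$ on $\partial X \times [0,1]$, the partial derivatives are $\partial_t F = -F_0(y)$ with $|\partial_t F| \le R$, and $\partial_{y_i} F = (1-t)\, \partial_{y_i} F_0$. The volume of the $n$-parallelepiped they span is at most the length of one edge times the $(n-1)$-volume of the opposite face (base-times-height), giving the pointwise Jacobian estimate
\[
\sqrt{\det g} \le R\,(1-t)^{n-1} |\mathrm{Jac}\, F_0(y)|.
\]
Integrating over the collar and using $\int_0^1 (1-t)^{n-1}\, dt = 1/n$ yields
\[
\Vol_n F(X) \le \frac{R}{n} \Vol_{n-1} F_0(\partial X),
\]
which is the desired bound with $C_n = 1/n$.

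There is no serious obstacle here; this is essentially the classical cone construction transplanted from the chain setting to the setting of maps. The two things worth checking carefully are the existence of a piecewise-smooth collar (a routine adaptation of the smooth collar neighborhood theorem) and the elementary base-times-height inequality used to control $\sqrt{\det g}$ by a single edge length times an $(n-1)$-dimensional Jacobian. Everything else is a single integration, and the exponential decay in $(1-t)^{n-1}$ is what produces the factor $1/n$.
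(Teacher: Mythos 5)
Your proposal is correct and is essentially the paper's own argument: a cone over $F_0(\partial X)$ built on a collar of $\partial X$, with the constant mapping of the rest of $X$ to the center, and the base-times-height Jacobian bound integrated in the collar parameter to produce the factor $1/n$. The only cosmetic difference is that the paper uses a cutoff $\rho(t)$ vanishing identically before the end of the collar (so the extension is smooth across the seam), whereas your linear factor $(1-t)$ gives a map that is merely piecewise smooth there, which is all that is needed in this setting.
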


\begin{proof} There is a tubular neighborhood $E$ of $\partial X$ in $X$ which is diffeomorphic to $\partial X \times (-1, 1]$.  We choose coordinates $(x,t)$ on this neighborhood, where $x \in \partial X$ and $t \in [0, 1)$.  The boundary $\partial X$ is given by the equation $t=0$.  Let $\rho(t)$ be a smooth non-negative function on $[0,1)$ with $\rho(0) = 1$ and $\rho(t) = 0$ for $t \ge 1/2$.  Then define $F$ on this tubular neighborhood by $F(x,t) = \rho(t) F_0 (x)$.  Note that $F$ maps the subset of $E$ where $1/2 \le t < 1$ to the origin.  Therefore, we can smoothly extend $F$ to all of $X$ by mapping the complement of $E$ to the origin.

The image of $F$ is the cone over the image of $F_0$.  By standard calculations in Riemannian geometry, we get 

$$ \Vol_n F(X) \le (1/n) R \Vol_{n-1} F_0(\partial X). $$

(If we were working in a Banach space, then the constant $1/n$ would have to be replaced by a larger constant $C_n$.)
\end{proof}

(Notice that cone inequality proves the extension lemma in the special case that $Vol_{n-1} F_0(\partial X) = 0$.  So in the rest of the proof, we may assume that
$Vol_{n-1} F_0(\partial X) > 0$.)

Our argument will be by induction on $n$.  The base case is $n=2$, which we now discuss.  In this case, $\partial X$ is a 1-dimensional
manifold.  It consists of finitely many connected components, which we call $\partial X_1, \partial X_2,$ etc.  For each connected component,
the diameter of $F_0(\partial X_i)$ is at most the length of $F_0(\partial X_i)$, which we denote by $L_i$.  For each $i$, we choose a point $y_i$ in the image of $F_0(\partial X_i)$ in $\RR^N$.  We observe that $F_0 (\partial X_i)$ is contained in $B^N(y_i, L_i)$.  

We let $E_i$ be a neighborhood of $\partial X_i$ in $X$, diffeomorphic to $\partial X_i \times [0,1)$.  By the same construction as the cone inequality, we can extend $F_0$ to a map $F: E_i \rightarrow B^N(y_i, L_i)$ so that $F$ maps all but a compact subset of $E_i$ to $y_i$ and so that $\Vol_2 F(E_i) \le C L_i^2$.  Now consider $X' = X \setminus \cup_i E_i$.  We have to define $F$ on $X'$ so that $F$ maps $\partial E_i$ to $y_i$.  We can choose $F$ so that the image of $F$ is a tree, and $\Vol_2 F(X') = 0$.  We have now defined an extension $F$ on $X$ with 

$$ \Vol_2 F(X) \le C \sum_i L_i^2 \le C (\sum_i L_i)^2 = C ( \Vol_1 F_0(\partial X) )^2. $$

This finishes the proof of the extension lemma in the base case $n=2$.  

Now we begin the proof of the inductive step.  We assume that the extension lemma holds in dimension $n-1$.  We construct the extension $F$ by repeatedly using
the following partial extension lemma. 

\newtheorem*{partext}{Partial extension lemma}

\begin{partext} Suppose that $X$ is a compact piecewise-smooth $n$-dimensional manifold with boundary.  If $F_0$ is a map from $\partial X$ to $B^N(R) \subset \RR^N$, then we can decompose $X$ as a union of two $n$-dimensional submanifolds with $\partial$, $X = X_1 \cup G_1$, and we can extend $F_0$ to a map $F_1: G_1 \rightarrow B^N(R)$ so that the following estimates hold.  

$$ \Vol_n F_1(G_1) \lesssim \Vol_{n-1} F_0(\partial X)^{\frac{n}{n-1}}. $$

Next, we note that $\partial X_1 \subset \partial G_1 \cup \partial X$.  Therefore, $F_1$ is defined on $\partial X_1$.  

$$ \Vol_{n-1} F_1(\partial X_1) \le (1 - \delta_n) Vol_{n-1} F_0( \partial X) . $$

\end{partext}

Using the partial extension lemma and the cone inequality, we can quickly finish the proof of the extension lemma.  Suppose that $F_0: \partial X \rightarrow \RR^N$.  Choose a large radius $R$ so that $F_0 (\partial X) \subset B^N(R)$.  
We use the partial extension lemma to define $X_1, G_1, F_1$.  
Then we consider $F_1: \partial X_1 \rightarrow B^N(R)$, and
we apply the extension lemma to it.  We use the partial extension lemma $J$ times, where $J$ is a large number that we will choose below.  We get a sequence of subsets $X_J \subset X_{J-1} \subset ... \subset X_1 \subset X_0 = X$, and a sequence of maps $F_j: X_{j-1} \setminus X_{j} \rightarrow B^N(R)$.  The maps $F_j$ fit together to define a single piecewise smooth map $F: X \setminus X_J \rightarrow B^N(R)$, extending $F_0: \partial X \rightarrow B^N(R)$.

By the second estimate in the partial extension lemma, we know that

$$ \Vol_{n-1} F_j (\partial X_j) \le (1 - \delta_n) \Vol_{n-1} F_{j-1} (\partial X_{j-1}). $$

Therefore,

$$ \Vol_{n-1} F_j (\partial X_j) \le (1 - \delta_n)^j \Vol_{n-1} F_0 (\partial X). $$

Next we can bound the volume of $F_j (X_{j-1} \setminus X_j)$, using the first estimate in the partial extension lemma.  

$$ \Vol_n F_j (X_{j-1} \setminus X_j) \lesssim \left( \Vol_{n-1} F_{j-1} (\partial X_{j-1}) \right)^{\frac{n}{n-1}} \lesssim (1 - \delta_n)^{\frac{n}{n-1} j} \Vol_{n-1} F_0 (\partial X)^\frac{n}{n-1}. $$

Summing the exponential sum, we see that

$$ \Vol_n F(X \setminus X_J) \lesssim \Vol_{n-1} F_0 (\partial X)^\frac{n}{n-1}. $$

This estimate holds uniformly in the choice of $J$.  Finally, we extend $F$ to $X_J$ using the cone inequality.  We get

$$ \Vol_n F(X_J) \lesssim R \Vol_{n-1} F_J (\partial X_J) \lesssim R (1 - \delta_n)^J \Vol_{n-1} F_0 (\partial X). $$

We now choose $J$ sufficiently large in terms of $R$ and $\Vol_{n-1} F_0 (\partial X)$ so that this final term is dominated by the previous term.  

This finishes the proof of the extension lemma from the partial extension lemma.  Now we turn to the proof of the partial extension lemma.

\begin{proof}[Proof of partial extension lemma]

At this point, it is convenient to know that our mapping is an embedding.  To accomplish this, we add extra dimensions to the
target space $\mathbb{R}^N$.  We let $F_0^+: \partial X \rightarrow \mathbb{R}^N \times \mathbb{R}^E$ be given by $F_0$ in the first
factor and by a nice embedding in the second factor.  By scaling the second factor, we can assume that $\Vol_{n-1} F_0^+ (\partial X) \le (1 + \epsilon)
\Vol_{n-1} F_0(\partial X)$ and we can assume that $F_0^+(\partial X) \subset B^{N+E}(R)$.  Now $F_0^+$ is an embedding.  Our construction will give a partial extension $F_1^+: G_1 \rightarrow B^{N+E}(R)$, obeying good estimates.  Finally, we define $F_1$ to be $F_1^+$ composed with the projection from $B^{N+E}(R)$ to $B^N(R)$.  This projection can only decrease volumes, so the resulting partial extension $F_1: G_1 \rightarrow B^N(R)$ obeys the same estimates as $F_1^+$.  In summary, it suffices to consider the case that $F_0$ is an embedding.

The proof of the partial extension lemma is based on a ball-covering argument, the extension lemma in dimension $n-1$, and the cone inequality.  It closely follows the argument in \cite{W}.

We consider the volumes $\Vol_{n-1} [F_0(\partial X) \cap B^N(p, r)]$ for various balls $B^N(p,r) \subset \mathbb{R}^N$.  Because $F_0$ is an embedding, we know that for every regular point $p$ in the image $F_0(\partial X)$, the volume $\Vol_{n-1} [F_0(\partial X) \cap B(p,r)] \ge c_n r^{n-1}$
for all sufficiently small $r$.  (We may take $c_n$ to be one half the volume of the unit $(n-1)$-ball.)  Since $F_0$ is a piecewise smooth embedding, almost every point of the image is regular.  

Fix any regular point $p$.  Let $V(r)$ denote $\Vol_{n-1} [F_0(\partial X) \cap B^N(p, r)]$.  Let $\epsilon_n > 0$ be a small constant that we will choose
below.  We let $r_0$ denote the largest radius $r$ so that $V(r) \ge \epsilon_n r^{n-1}$.  Since $V(r) \le Vol F_0(\partial) < \infty$, such an
$r$ exists, and we have $V(r_0) = \epsilon_n r_0^{n-1}$.  Since $p$ is a regular point, $r_0 > 0$.  By the definition of $r_0$, we see that $V(5 r_0) < \epsilon_n (5 r_0)^{n-1}$.

Now we consider the intersections $F_0(\partial X) \cap \partial B(p,r)$.  Since $F_0$ is in general position, for almost every $r$, $F_0$
is transverse to $\partial B(p,r)$.  By the coarea inequality, $\int_{r_0}^{5 r_0} \Vol_{n-2} [F_0(\partial X) \cap \partial B(p,r)] \le V(5 r_0)$.
Therefore, we can choose a generic value $r \in (r_0, 5 r_0)$ so that $\Vol_{n-2} F_0(\partial X) \cap \partial B(p,r) \lesssim \epsilon_n r_0^{n-2} \le \epsilon_n r^{n-2}$.  We call this radius $r$ a good radius, and we call $B(p,r)$ a good ball.

The good ball $B(p,r)$ obeys three important geometric estimates.

$$ 1. \Vol_{n-1} [ F_0(\partial X) \cap B(p,r) ] \gtrsim \epsilon_n r^{n-1}. $$

$$ 2. \Vol_{n-1} [F_0(\partial X) \cap B(p, 5r) ] \lesssim \epsilon_n r^{n-1}. $$

$$ 3. \Vol_{n-2} [F_0(\partial X) \cap \partial B(p,r) ] \lesssim \epsilon_n r^{n-2}. $$

Using the Vitali covering lemma, we choose a finite collection of disjoint good balls $B_i = B(p_i, r_i)$ so that $\cup B(p_i, 5r_i)$ covers most of $F_0 (\partial X)$.  More precisely:

$$\Vol_{n-1} \left[ F_0(\partial X) \cap (\cup B(p_i, 5r_i)) \right] \ge \frac{1}{2} \Vol_{n-1} F_0 (\partial X). $$

By properties 1 and 2, it follows that 

\begin{equation} \label{vitcov} \Vol_{n-1} \left[ F_0(\partial X) \cap (\cup_i B_i) \right] \gtrsim \Vol_{n-1} F_0(\partial X). \end{equation} 

Now we are ready to define the sets $G_1$ and $X_1$.  We first extend $F_0$ to $X$ in an arbitrary (generic) way.   We denote this extension by $F_0$.  We define $G_1 \subset X$ to be $F_0^{-1} ( \cup_i B_i)$, and we define $X_1 \subset X$ to be the complement of $G_1$.  Since we chose the radii $r_i$ generically, we can assume that $F_0$ is transverse to $\partial B_i$, and so $G_1$ and $X_1$ are (piecewise smooth) manifolds with boundary.  

We still have to define the partial extension $F_1: G_1 \rightarrow \mathbb{R}^N$, extending $F_0$.   We let $G_1(i) := F_0^{-1}(B_i)$.  The boundary of $G_1(i)$ has two parts.  One part of $\partial G_1(i)$ lies in $\partial X$.  We let $Y_1(i)$ denote the rest of the boundary:

$$ Y_1(i) := \partial G_1(i) \setminus \partial X. $$

We can think of $Y_1(i)$ as the set of $x$ in the interior of $X$ so that $F_0(x) \in \partial B_i$.  Now $Y_1(i)$ is an $(n-1)$-dimensional piecewise smooth manifold with boundary.  Its bounday lies in $\partial X$, and $F_0: \partial Y_1(i) \rightarrow \partial B_i$.  By Property 3 above, we know that 

$$\Vol_{n-2} F_0 (\partial Y_1(i) ) = \Vol_{n-2} [F_0(\partial X) \cap \partial B(p_i,r_i) ] \lesssim \epsilon_n r_i^{n-2}. $$

Now we can use the extension lemma for dimension $n-1$ to define a good map $F_1$ on $Y_1(i)$ extending $F_0: \partial Y_1(i) \rightarrow B(p_i, r_i)$.  The map $F_1$ will obey the following estimate:

$$ \Vol_{n-1} F_1( Y_1(i) ) \lesssim \Vol_{n-2} F_0(\partial Y_1(i))^{\frac{n-1}{n-2}} \lesssim I(n-1) \epsilon_n^{\frac{n-1}{n-2}} r_i^{n-1}. $$

\noindent (Here $I(n-1)$ denotes the constant in the extension lemma in dimension $n-1$.)

We have now defined $F_1$ on all of $\partial X_1$, and we are ready to prove the crucial volume estimate $\Vol_{n-1} F_1( \partial X_1) \le (1 - \delta_n) Vol_{n-1} F_0(\partial X)$.  
To do this, let $\partial X(i) := \partial X \cap F_0^{-1}(B_i)$.  Now $\partial X_1$ is formed from $\partial X$ by deleting each $\partial X(i)$ and adding in each $Y_1(i)$.  Therefore,

$$ \Vol_{n-1} F_1 (\partial X_1) = \Vol_{n-1} F_0(\partial X) - \sum_i \Vol_{n-1} F_0(\partial X) \cap B_i + \sum_i \Vol_{n-1} F_1( Y_1(i)). $$

By the first property of a good ball, we know that $\Vol_{n-1} F_0 (\partial X) \cap B_i \gtrsim \epsilon_n r_i^{n-1}$.  On the other
hand, $\Vol_{n-1} F_1 (Y_1(i)) \lesssim I(n-1) \epsilon_n^{\frac{n-1}{n-2}} r_i^{n-1}$.  The key observation is that we have a better power of $\epsilon_n$ in the volume bound for $F_1 (Y_1(i))$.  At this point we choose $\epsilon_n$ sufficiently
small compared to the other dimensional constants to guarantee that

\begin{equation} \label{volF_1Y} \Vol_{n-1} F_1(Y_1(i) ) \le \frac{1}{2} \Vol_{n-1} F_0(\partial X) \cap B_i. 
\end{equation}

Plugging this estimate in, we see that

$$ \Vol_{n-1} F_1 (\partial X_1) \le \Vol_{n-1} F_0(\partial X) - \frac{1}{2} \sum_i \Vol_{n-1} F_0(\partial X) \cap B_i . $$

But as we noted above in Equation \ref{vitcov}, the first two properties of a good ball imply that
imply that $\sum_i \Vol_{n-1} F_0(\partial X) \cap B_i \gtrsim \Vol_{n-1} F_0(\partial X)$.  Therefore, we conclude

$$ \Vol_{n-1} F_1(\partial X_1) \le (1 - \delta_n) \Vol_{n-1} F_0(\partial X). $$

This is one of the two estimates in the conclusion of the partial extension lemma.  

Recall that $G_1$ is defined to be $X \setminus X_1$.  In other words, $G_1 = F_0^{-1} (\cup_i B_i)$.  We let $G_1(i)$ be $F_0^{-1}(\bar B_i)$.  Next we have to define $F_1$ on $G_1$ and bound the volume $\Vol_n F_1(G_1)$.  We will use the cone inequality to define $F_1$ on each $G_1(i)$.  

Note that $\partial G_1(i) \subset Y_1(i) \cup \partial X$.  We have already defined $F_1$ on $Y_1(i)$, and $F_1 = F_0$ on $\partial X$, and so we've already defined $F_1$ on $\partial G_1(i)$.  We also have estimates about the volume of $F_1 (Y_1(i))$ and $F_1( \partial X \cap G_1(i))$.   

Using Property 2 of good balls, we see that 

$$ \Vol_{n-1} F_1 (\partial X \cap G_1(i)) = \Vol_{n-1} F_0 (\partial X) \cap B(p_i, r_i) \lesssim r_i^{n-1}. $$

And by Equation \ref{volF_1Y}, we know that

$$ \Vol_{n-1} F_1 (Y_1(i)) \le \frac{1}{2}  \Vol_{n-1} F_0 (\partial X) \cap B(p_i, r_i) \lesssim r_i^{n-1}. $$

Altogether, we see that $\Vol_{n-1} F_1 ( \partial G_1(i) ) \lesssim r_i^{n-1}$.  By the cone inequality, we can extend $F_1$ to all of $G_1(i)$ so that

$$ \Vol_n F_1( G_1(i) ) \lesssim r_i  \Vol_{n-1} F_1 (\partial G_1(i)) \lesssim r_i^n \sim [ \Vol_{n-1} F_0(\partial X) \cap B_i ]^\frac{n}{n-1}. $$

Adding the contributions from different balls $B_i$, we see that

$$ \Vol_n F_1(G_1) \lesssim \sum_i [ \Vol_{n-1} F_0(\partial X) \cap B_i ]^\frac{n}{n-1} \lesssim Vol_{n-1} F_0(\partial X)^{\frac{n}{n-1}}. $$

We have now defined the partial extension $F_1: G_1 \rightarrow \RR^N$, and we have proven both estimates about $F_1$.

(We would like the map $F_1$ to send $G_1$ to $B^N(R)$.  The construction above may not map $G_1$ to $B^N(R)$, but we can fix that problem in a simple way.  Let $\pi: \RR^N \rightarrow B^N(R)$ be the closest point map.  (The map $\pi$ is the identity on $B^N(R)$, and it maps each point outside of $B^N(R)$ to the closest point on the boundary of $B^N(R)$.)  Since balls are convex, $\pi$ is distance-decreasing, and so $\pi$ decreases $k$-dimensional volumes for all $k$.  The map $\pi \circ F_1$ maps $G_1$ to $B^N(R)$, and it obeys all the same estimates as $F_1$.)

\end{proof}

This finishes the proof of the extension lemma.

\end{proof}

\section{Proof of the push-out lemma for small surfaces}

In this section, we prove Lemma \ref{pushoutlemma}.  

We explained in the introduction that there is some intuition for the pushout lemma coming from the monotonicity formula in minimal surface theory.  
We will prove the push-out lemma without using minimal surface theory, but the proof
is based on some version of the monotonicity idea.  We intersect $\phi_0(X)$ with various balls $B(p,r) \subset K$.  If the volume
of the intersection does not obey a monotonicity-type estimate, we can homotope $\phi_0$ to a new map with smaller volume.  
This homotopy is accomplished using the extension lemma from the last section.  We continue performing this type of homotopy,
decreasing the total volume each time, until the amount of volume outside the $W$-neighborhood of $\partial K$ gets as tiny
as we like.  Finally, we push this tiny volume into the $W$-neighborhood of $\partial K$ using the Federer-Fleming push-out lemma.

Now we turn to the details.  We homotope $\phi_0$ to $\phi_1$ in a sequence of small steps, and each step is given by the
following lemma.

For any $W > 0$, we let $K(W)$ denote the set $\{ x \in K | \Dist(x, \partial K) > W \}$.

\begin{lemma} \label{incrpushout} Let $\delta > 0$ be any number.  Suppose that $\Phi: (X, \partial X) \rightarrow (K, \partial K)$.  Let $V = Vol_n \Phi(X)$.  
Define $W$ so that $ \sigma_n W^n = V$.  Suppose that the volume of $\Phi(X) \cap K(W)$ is at least $\delta$.  Then we can homotope $\Phi$ rel
$\partial X$ to a new map $\Phi'$ so that its total volume is decreased by a definite amount:

$$ \Vol_n [ \Phi'(X) ] \le \Vol_n [ \Phi(X) ] - c( \delta, n, K) .$$

\end{lemma}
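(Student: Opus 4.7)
The plan is to locate a ball $B(p,r)\subset K$ centered at a regular point $p$ of $\Phi(X)$ lying in $K(W)$, chosen so that $\Phi(X)\cap B(p,r)$ carries a large $n$-volume while the slice $\Phi(X)\cap\partial B(p,r)$ has small $(n-1)$-volume, and then to replace $\Phi$ on $\Phi^{-1}(B(p,r))$ using Lemma~\ref{extlemma}. Because $B(p,r)\subset K(W)$ is disjoint from $\partial K$, the replacement is automatically rel $\partial X$, and because $K$ is convex, straight-line interpolation inside $K$ produces the homotopy through maps $(X,\partial X)\to(K,\partial K)$.

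\textbf{Step 1: monotonicity failure and local replacement.} For a regular $p\in\Phi(X)\cap K(W)$ and $r\in(0,W)$, set $v(r)=\Vol_n(\Phi(X)\cap B(p,r))$ and $a(r)=\Vol_{n-1}(\Phi(X)\cap\partial B(p,r))$. The coarea inequality gives $v'(r)\ge a(r)$ almost everywhere. If the reverse isoperimetric bound $a(r)\ge (v(r)/I(n))^{(n-1)/n}$ held for every $r\in(0,W)$, the resulting differential inequality $v'\ge v^{(n-1)/n}/I(n)^{(n-1)/n}$, together with $v(0)=0$, would force $v(W)\ge W^n/(n^n I(n)^{n-1})$, contradicting $v(W)\le V=\sigma_n W^n$ once $\sigma_n$ is chosen dimensionally small. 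Hence there is a ``good'' radius $r\in(0,W)$ at which $I(n)a(r)^{n/(n-1)}<v(r)$. Applying Lemma~\ref{extlemma} to $\Phi|_{\partial\Phi^{-1}(B(p,r))}$ and post-composing with the distance-decreasing closest-point retraction onto $\overline{B(p,r)}$ yields an extension $F\colon\Phi^{-1}(B(p,r))\to\overline{B(p,r)}$ with $\Vol_n F\le I(n) a(r)^{n/(n-1)}<v(r)$. Pasting $F$ in place of $\Phi$ on $\Phi^{-1}(B(p,r))$ produces a valid rel $\partial X$ homotopy that strictly decreases the total volume.

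\textbf{Step 2: quantifying the reduction by $c(\delta,n,K)$.} This is the main obstacle, because a single good ball may save only an arbitrarily small amount of volume. The plan is to select, for each regular $p\in\Phi(X)\cap K(W)$, a radius $r_p\in(0,W)$ satisfying both a strong form of the monotonicity failure, namely $v_p(r_p)-I(n)a_p(r_p)^{n/(n-1)}\ge\tfrac{1}{2}v_p(r_p)$, and a doubling bound $v_p(5r_p)\le C(n)v_p(r_p)$; the existence of such $r_p$ follows from a stopping-time argument combined with the ODE estimate of Step~1, closely mirroring the ball-selection argument in Wenger's proof of the isoperimetric inequality~\cite{W}. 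A Vitali $5r$-covering then extracts a disjoint sub-collection $\{B(p_i,r_i)\}$ whose $5$-dilations cover $\Phi(X)\cap K(W)$, so $\sum_i v_{p_i}(5r_i)\ge\delta$ and hence $\sum_i v_{p_i}(r_i)\gtrsim\delta/C(n)$. Since the $B(p_i,r_i)$ are pairwise disjoint, the replacement of Step~1 can be performed independently on each $\Phi^{-1}(B(p_i,r_i))$, and the total volume decrease is at least $\tfrac{1}{2}\sum_i v_{p_i}(r_i)\gtrsim\delta$, yielding the desired constant $c(\delta,n,K)$. The convex set $K$ enters only through an a priori upper bound on the ambient scale (via $\Diam(K)$) used to terminate the stopping time.
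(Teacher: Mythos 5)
Your Step 1 is essentially the paper's mechanism (coarea plus the differential inequality to produce a ``good'' radius, then a replacement via Lemma \ref{extlemma} inside the ball), and it is fine. The gap is in Step 2, and it is a real one. The entire quantitative content of Lemma \ref{incrpushout} rests on the claim that every regular point $p$ of $\Phi(X)\cap K(W)$ admits a radius $r_p\in(0,W)$ that is simultaneously ``good'' in the strong sense and satisfies the doubling bound $v_p(5r_p)\le C(n)\,v_p(r_p)$; you assert this ``follows from a stopping-time argument \ldots closely mirroring Wenger'', but no argument is given, and the claim is genuinely delicate. In Wenger-type ball selection the doubling comes for free from maximality of a density radius for the \emph{boundary} surface; here you need it for the $n$-volume of $\Phi(X)$ itself, at a radius that also witnesses failure of the isoperimetric monotonicity, and these two requirements fight each other: around a point surrounded by a cascade of concentric spheres whose volumes grow geometrically with the scale (volume multiplying by roughly $C$ each time the radius multiplies by $5$, starting from an arbitrarily small seed through $p$), every good radius sees a volume burst between $r$ and $5r$, so $C(n)$-doubling fails at that point while the total volume stays below $\sigma_n W^n$ when $C(n)<5^n$. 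Such configurations do not immediately kill your covering scheme (they occupy little measure), but they show the selection lemma is not routine: you would have to specify $C(n)$, prove the selection on a set of points carrying a definite fraction of the measure, and control the exceptional set --- none of which is sketched. As written, the proof of the key quantitative step is missing.

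You are also working much harder than necessary, because you overlooked that the constant $c(\delta,n,K)$ is allowed to depend on $K$. The paper's proof fixes exactly the problem you flag (``a single good ball may save only an arbitrarily small amount'') with one ball and no covering argument: since $\Vol_n[\Phi(X)\cap K(W)]\ge\delta$ forces $V\ge\delta$ and hence $W\ge(\delta/\sigma_n)^{1/n}$, the bounded convex set $K(W)$ is covered by a controlled number of balls of radius $W/2$, so by pigeonhole/averaging there is a center $p\in K(W)$ with $\Vol_n[\Phi(X)\cap B(p,W/2)]\ge c(\delta,n,K)$. One then searches for a good radius only in the annular range $W/2<R<W$; if none exists, integrating $\frac{d}{dR}V(R)^{1/n}\gtrsim_n 1$ over $[W/2,W]$ gives $V(W)\ge C(n)W^n$, contradicting $V=\sigma_n W^n$ for $\sigma_n$ small. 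Because $V(R)\ge V(W/2)\ge c(\delta,n,K)$ by monotonicity, the single replacement already saves $\tfrac12 V(R)\ge c(\delta,n,K)$, which is all the lemma asks. Replacing your Step 2 by this choice-of-center argument (and restricting the good-radius search to $(W/2,W)$) closes the gap; alternatively, if you want to keep the multi-ball route, you must actually prove the good-radius-with-doubling selection, which is a nontrivial lemma in its own right.
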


\begin{proof} The constant $\sigma_n$ in the formula above is the same constant as in our push-out lemma for small surfaces.  We haven't
chosen the constant yet.  We will choose a sufficiently small $\sigma_n > 0$ below.

We find a point $p \in K(W)$ so that

$$ \Vol_n [ \Phi(X) \cap B(p, W/2) ] \ge c(\delta, n, K) . $$

We can find $p$ by averaging over all points in $K(W)$.  The total volume of $\Phi(X) \cap K(W)$
is assumed to be at least $\delta$, and so we can find a ball that contains a certain definite amount
of volume $c(\delta, n , K)$.

We let $V(R)$ denote the volume of $\Phi(X) \cap B(p, R)$.  We let $A(R)$ denote the area of $\Phi(X) \cap \partial B(p,R)$.  

Our argument will be based on the extension lemma from the last section.  We recall that $I(n)$ is the isoperimetric constant in the extension lemma.

We call a radius $R$ good if $W/2 < R < W$ and if $V(R)$ obeys the inequality

$$ V(R) \ge 2 I(n) A(R)^{\frac{n}{n-1}} . $$

For generic $R$, $\Phi^{-1}[\partial B(p,R)] \subset X$ is a piecewise smooth (n-1)-manifold, and so $\Phi^{-1} [B(p,R)] = X' \subset X$ is a piecewise smooth manifold with boundary (of the same dimension $n$ as $X$).  Note that $\Vol_{n-1} \Phi(\partial X') = A(R)$.  
 Also, since $p \in K(W)$ and $R < W$, the ball $B(p,R)$ is contained in the interior of $K$, and so $X'$ does not intersect $\partial X$.

If there is a generic good radius $R$, then we let $B := B(p,R)$, and we apply the extension lemma to the map $\Phi: (X', \partial X') \rightarrow (B, \partial B)$.
The extension lemma gives us a new map $\Phi': (X', \partial X') \rightarrow (B, \partial B)$ so that $\Phi'$ agrees with $\Phi$ on $\partial X'$ and we get
the following volume estimate:

$$ \Vol_n \Phi'(X') \le I(n) A(R)^{\frac{n}{n-1}} \le \frac{1}{2} V(R) . $$

Now we extend $\Phi'$ to all of $X$ by letting $\Phi'$ agree with $\Phi$ on $X \setminus X'$.  Since $K$ is convex, we can easily homotope $\Phi$ to $\Phi'$ rel $\partial X$.  Most importantly, the volume $\Phi'(X)$
is smaller than the volume of $\Phi(X)$ by a definite amount:

$$ \Vol_n \Phi(X) - \Vol_n \Phi'(X) = \Vol_n \Phi(X') - \Vol_n \Phi'(X') \ge \frac{1}{2} V(R) \ge 1/2 \Vol_n [ \Phi(X) \cap B(p, W/2) ] \ge c(\delta, n, K) . $$

This estimate suffices to prove the Lemma, provided that we can find a generic good radius $R$.  Suppose that almost every
radius $R$ in the range $W/2 < R < W$ is not good.  In other words, we have the following inequality.

$$ V(R) \le 2 I(n) A(R)^{\frac{n}{n-1}}, \textrm{ for  almost every } R \in [W/2, W].$$

But $V'(R) \ge A(R)$.  Hence we have

$$V'(R) \ge [ 2 I(n) ]^{- \frac{n-1}{n}} V(R)^{\frac{n-1}{n}}  \textrm{ for almost every } R \in [W/2, W].$$

Equivalently,

$$ \frac{d}{dR} [ V(R)^{1/n} ] = (1/n) V'(R) V(R)^{- \frac{n-1}{n}} \ge (1/n) [2 I(n)]^{- \frac{n-1}{n}} . $$

Integrating this inequality from $W/2$ to $W$, we see that

$$ V(W) \ge C(n) W^n, $$

\noindent where $C(n) = n^{-n} 2^{-(n-1)} I(n)^{-(n-1)}$.  But $V(W) \le V = \sigma_n W^n$.   If we choose $\sigma_n > 0$ sufficiently small, we get a contradiction.  Hence there exists a generic good radius $R$, and the lemma is proved.  \end{proof}

With this lemma, we can prove the push-out lemma for small surfaces.

\begin{proof}[Proof of the push-out lemma for small surfaces]

We fix a $\delta > 0$ and we use Lemma \ref{incrpushout} repeatedly.  We get a sequence of maps $\psi_j$ homotopic to $\phi_0$ rel $\partial X$, and with $\Vol_n \psi_j(X) \cap K(W)$ decreasing steadily until $\Vol_n \psi_j(X) \cap K(W) < \delta$.   We label this last map $\phi_{1/2}$.  We know that $\phi_{1/2}$ is homotopic to $\phi_0$ rel $\partial X$ and 
that $\Vol_n \phi_{1/2}(X) \le \Vol_n \phi_0(X)$ and the volume of $\phi_{1/2}(X) \cap K(W)$ is less than $\delta$.  

Next, we use the Federer-Fleming push-out lemma to remove the tiny volume from $K(W)$.  Note that $K(W)$ is itself a convex set.  We define $X' = \phi_{1/2}^{-1}(K(W))$.  We note that $\phi_{1/2}: (X', \partial X') \rightarrow K(W)$.  Using Lemma \ref{ffpushout}, we homotope $\phi_{1/2}$ rel $\partial X'$ to a new map $\phi_1: X' \rightarrow \partial K(W)$ so that $\Vol_n \phi_1(X') \le C(n, N, K) \delta$.  Now we get a new map $\phi_1$ from $X$ to $K$, with image lying in the $W$-neighborhood of $\partial K$, and with $\Vol_n \phi_1(X) \le \Vol_n \phi_0(X) + C(n,N, K(W)) \delta$.  Since we can choose $\delta$ as small as we like, we can arrange that $\Vol_n \phi_1(X)$ is as close as we like to $\Vol_n \phi_0(X)$.

In fact, with a little more work, we can arrange that $\Vol_n \phi_1(X) \le \Vol_n \phi_0(X)$.  This is not an important point, but it is convenient for keeping the notation simple when we apply the pushout lemma.  We consider two cases.  If $\Vol_n \phi_0(X) \cap K(W) = 0$, then we just apply the Federer-Fleming pushout as in the last paragraph to homotope $\phi_0$ to $\phi_1$ with image in the $W$-neighborhood of $\partial K$.  If $\Vol_n \phi_0(X) \cap K(W) = V_0 > 0$, then we apply Lemma \ref{incrpushout}.  The first application gives a map $\phi_{1/4}$ where $\Vol_n \phi_{1/4}(X) \le \Vol_n (\phi_0(X)) - c(n, K, V_0)$.  After noting the constant $c(n, K, V_0) > 0$, we now choose some $\delta > 0$, and we continue to apply Lemma \ref{incrpushout} until we arrive at a map $\phi_{1/2}: X \rightarrow K$ so that $\Vol_n \phi_{1/2}(X) \cap K(W) < \delta$.  At this point, we use the Federer-Fleming pushout argument to homotope $\phi_{1/2}$ to a map $\phi_1$ with image in the $W$-neighborhood of $K$.  The volume of $\phi_1(X)$ is at most

$$\Vol_n \phi_1(X) \le \Vol_n (\phi_0(X)) - c(n, K, V_0) + c(n, N, K(W)) \delta.  $$

After noting $c(n, K, V_0)$, we choose $\delta$ sufficiently small so that $\Vol_n \phi_1(X) \le \Vol_n \phi_0(X)$.  

\end{proof}

\section{An example related to Uryson width}

In this section, we give an example of a Riemannian manifold where each unit ball has small Uryson width,
and yet the whole space has large Uryson width.

\begin{prop} For any $\epsilon > 0$, there is a metric $g_\epsilon$ on $S^3$ so that the following holds.
Every unit ball in $(S^3, g_\epsilon)$ has Uryson 2-width $< \epsilon$.  The whole manifold $(S^3, g_\epsilon)$
has Uryson 2-width at least 1.
\end{prop}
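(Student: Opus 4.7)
My plan is to follow Gromov's Example $(H_1'')$ in \cite{Gr2}, which provides the blueprint for this counterexample. The construction produces a metric $g_\epsilon$ on $S^3$ that locally looks $\epsilon$-close to a $2$-dimensional complex but whose global $3$-dimensional geometry cannot be compressed into any $2$-complex with fibers of diameter $<1$.

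For the construction, the idea is to realize $(S^3, g_\epsilon)$ as an $\epsilon$-thickening of a $2$-dimensional ``spine'' inside $S^3$, arranged in a topologically twisted way. Near each point $p \in S^3$, the unit ball $B(p,1)$ is an $\epsilon$-neighborhood of a piece of the spine of bounded extrinsic size, and locally the thickening is essentially a product. The key feature is that the local spines do not assemble into a single global $2$-complex: the thickening direction varies with $p$ in a way dictated by the topology of $S^3$. Since Theorem \ref{main} rules out a counterexample with small-volume unit balls, the construction must be arranged so that unit balls have volume bounded below by a dimensional constant; they are ``volume-fat but width-thin.''

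The local upper bound follows directly: projecting $B(p,1)$ to its local spine yields a continuous map to a $2$-dimensional set with fibers of diameter $O(\epsilon)$, giving $UW_2(B(p,1)) < \epsilon$ for suitable constants.

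The main obstacle is the global lower bound $UW_2(S^3, g_\epsilon) \ge 1$. The plan is to argue by contradiction: a continuous $\pi : (S^3, g_\epsilon) \to Z^2$ with all fibers of diameter $<1$ would, restricted to each unit ball, nearly factor through the local spine, collapsing along the local thickening direction. The twisting of the spines then obstructs gluing these local near-factorizations into a single global map, because a topological invariant of the construction (such as a suitable degree, or the fact that the thickening directions define a non-trivial class on $S^3$) must remain non-trivial under any small-fiber projection. Making this global obstruction precise is the heart of the proof and is exactly where Gromov's example exploits the topology of $S^3$ to prevent compressibility to dimension $2$ at the unit scale; it is also the step most sensitive to the exact details of the construction, so the bulk of the work will be to choose the twisted thickening so that both the quantitative local estimate and the topological global obstruction can be verified simultaneously.
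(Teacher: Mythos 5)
There is a genuine gap: you have named the right source (Example $(H_1'')$ in \cite{Gr2}) but you never actually produce a metric, and the two places where you defer the work are exactly the places where the proof lives. First, the construction. Your local picture --- each unit ball is an $\epsilon$-neighborhood of a piece of a $2$-dimensional spine, ``essentially a product'' --- cannot be literally correct, as your own consistency remark shows: a metrically thin thickening has unit balls of volume $O(\epsilon)$, and then Theorem \ref{main} would give $UW_2(S^3,g_\epsilon)\le 1$ globally at a scale that (after the obvious rescaling) kills the lower bound you want. Saying the balls must be ``volume-fat but width-thin'' identifies the tension but does not resolve it, and ``the thickening direction varies with $p$ in a topologically twisted way'' does not define a Riemannian metric nor explain why the local width is small. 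The paper's construction is quite different in mechanism: take a fine triangulation (mesh $\delta$) of the round $S^3$, let $K_1$ be its $1$-skeleton and $K_2$ the dual $1$-skeleton, let $U_1$ be a thin neighborhood of $K_1$ with boundary $\Sigma$, and insert a long product cylinder $\Sigma\times[0,10]$ between $U_1$ and $U_2=S^3\setminus \bar U_1$. Unit balls then have small $UW_2$ not because they are thin, but because the cylinder is so long that every unit ball misses $U_1$ or $U_2$, and $X\setminus U_i$ admits an explicit map onto the $2$-complex $K_{3-i}\times[0,10]$ (retraction $r_{3-i}$ in the $\Sigma$-factor, identity in $t$) whose fibers have diameter $\lesssim\delta$ because the triangulation is fine. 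These balls have volume of order $1$, so there is no conflict with Theorem \ref{main}.

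Second, and more seriously, your plan for the global lower bound --- a contradiction argument in which a small-fiber map $\pi:S^3\to Z^2$ ``nearly factors through the local spine'' and a degree-type obstruction prevents gluing --- is left entirely unproved, and it is a delicate claim (continuous maps with small fibers need not factor, even approximately, through any local collapse). The whole point of the paper's construction is that no such argument is needed: the metric is obtained from the round metric $g_0$ only by stretching (inserting the cylinder), so the identity map $(S^3,g_\epsilon)\to(S^3,g_0)$ is distance-nonincreasing up to a factor $(1+\delta)$; hence any fiber of diameter $<1$ for $g_\epsilon$ has diameter $<1+\delta$ for $g_0$, and $UW_2(S^3,g_\epsilon)\ge (1+\delta)^{-1}UW_2(S^3,g_0)\gtrsim 1$ by the Lebesgue covering lemma (or Katz's exact computation). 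To repair your write-up you should either carry out your topological-obstruction argument in full --- which would be a substantially harder route --- or replace both halves by a concrete construction of the above type in which the lower bound is inherited monotonically from the round sphere and the local upper bound comes from explicit retractions onto $1$-complexes crossed with an interval.
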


The main idea of the proof comes from Example $(H_1'')$ in \cite{Gr2}.

\begin{proof} Let $g_0$ denote the standard unit-sphere metric on $S^3$.  Let $T$ denote a fine triangulation
of $(S^3, g_0)$.  We choose $T$ sufficiently fine so that the lengths of edges are at most $\delta$, for a small
constant $\delta > 0$ we may choose later.  We let $K_1$ denote the 1-skeleton of the triangulation, and we let
$K_2$ denote the dual 1-skeleton.  There is one vertex of $K_2$ in each 3-face of $T$, and there is an edge
connecting two vertices of $K_2$ if the corresponding 3-faces share a common 2-face in their boundaries.

Now we let $U_1$ denote the $\delta_1$-neighborhood of $K_1$ for a small constant $\delta_1 << \delta$.
We let $U_2$ denote $S^3 \setminus \bar U_1$ and we let $\Sigma$ denote $\partial U_1$.  By choosing
$\delta_1$ sufficiently small, we can arrange that $K_2 \subset U_2$.

Next we choose retractions $r_1: \bar U_1 \rightarrow K_1$, and $r_2: \bar U_2 \rightarrow K_2$.
We can choose $r_1$ and $r_2$ to obey the following diameter estimates:

\begin{itemize} 

\item For each $y \in K_i$, the diameter of $r_i^{-1}(y)$ within $\bar U_i$ is $\lesssim \delta$.

\item For each $y \in K_i$, the diameter of $r_i^{-1}(y) \cap \Sigma$ within $\Sigma$ is $\lesssim \delta$.

\end{itemize}

We define the space $X$ to be $U_1 \cup (\Sigma \times [0, 10]) \cup U_2$, where the three pieces are glued
together as follows.  We glue $\partial U_1 = \Sigma$ to $\Sigma \times \{ 0 \}$ using the identity map, and
we glue $\partial U_2 = \Sigma$ to $\Sigma \times \{10 \}$ using the identity map.  We put a metric on $X$, where
$U_1$, $U_2$, and $\Sigma$ have the metric inherited from $(S^3, g_0)$, and $\Sigma \times [0,10]$ has the product
metric.  The space $X$ is homeomorphic to $S^3$, and it is $(1+ \delta)$ bilipschitz to a Riemannian metric $(S^3, g)$.

The Riemannian metric $(S^3, g)$ is formed by taking a small tubular neighborhood of $\Sigma \subset S^3$,
say $N = \Sigma \times (-\delta_2, \delta_2)$, and stretching it so that the $(-\delta_2, \delta_2)$ direction becomes
long.  As such $UW_2(S^3, g) \ge (1 + \delta)^{-1} UW_2(S^3, g_0) \gtrsim 1$ by the Lebesgue covering lemma.

(The exact value of $UW_2(S^3, g_0)$ is known by work of Katz, see \cite{K}.)

On the other hand, if $B_1 \subset (S^3, g)$ denotes any unit ball, then 
$UW_2 (B_1) \le (1+ \delta) UW_2(B)$ where $B \subset X$ is a ball of radius $(1 + \delta)$.
Any such ball $B \subset X$ lies either in $X \setminus U_1$ or $X \setminus U_2$.  Therefore,
the result follows from the following lemma.

\begin{lemma} If $\delta > 0$ is sufficiently small, 
the Uryson width $UW_2(X \setminus U_i) \lesssim \delta$ for $i = 1, 2$.

\end{lemma}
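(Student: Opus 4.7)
The plan is to construct, for $i=1$, an explicit continuous map $\pi:X\setminus U_1\to K_2\times[0,10]$ to a $2$-dimensional polyhedron with all fibers of diameter $\lesssim\delta$; this immediately gives $UW_2(X\setminus U_1)\lesssim\delta$ from the definition of Uryson width. The case $i=2$ will be handled by the mirror construction, swapping the roles of $(U_1,K_1,r_1)$ and $(U_2,K_2,r_2)$ and the two ends of the collar $\Sigma\times[0,10]$.

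First I would record the decomposition $X\setminus U_1=(\Sigma\times[0,10])\cup\bar U_2$, glued along the interface $\Sigma\times\{10\}=\partial U_2$ by the identity. Since $\partial U_2\subset\bar U_2$, the retraction $r_2$ restricts to a map $r_2|_\Sigma:\Sigma\to K_2$, and the two diameter hypotheses on $r_2$ give $\Diam_{\bar U_2}(r_2^{-1}(y))\lesssim\delta$ and $\Diam_\Sigma(r_2^{-1}(y)\cap\Sigma)\lesssim\delta$ for every $y\in K_2$.

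I would then define $\pi$ using only the single retraction $r_2$:
\[
\pi(x,t)=(r_2(x),t)\text{ for }(x,t)\in\Sigma\times[0,10],\qquad \pi(u)=(r_2(u),10)\text{ for }u\in\bar U_2.
\]
Both formulas give $(r_2(x),10)$ on the identified interface $\Sigma\times\{10\}=\partial U_2$, so $\pi$ is well-defined and continuous. To verify the fiber bound: for $(z,t)$ with $t<10$, the fiber $\pi^{-1}(z,t)=(r_2|_\Sigma)^{-1}(z)\times\{t\}$ lies in the slice $\Sigma\times\{t\}$, and its diameter in the product metric coincides with the $\Sigma$-diameter of $r_2^{-1}(z)\cap\Sigma$, hence is $\lesssim\delta$. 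For $(z,10)$, the fiber is $r_2^{-1}(z)\subset\bar U_2$ (the contributions from $\Sigma\times\{10\}$ and from $U_2$ are identified along $\partial U_2$), whose diameter in the inherited $(S^3,g_0)$-metric is $\lesssim\delta$.

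The main conceptual point is that a single retraction $r_2$ is enough; no interpolation between $r_1$ and $r_2$ near the interface is required. This works because the collar carries a product metric, so fibering it via $(r_2|_\Sigma,\mathrm{id})$ produces fibers of the form (level set of $r_2|_\Sigma$)$\times\{t\}$, whose diameter is controlled by the $\Sigma$-diameter of the level set. The only possible worry---that the two pieces of $\pi$ might disagree on the interface---disappears because $\Sigma\times\{10\}$ is glued to $\partial U_2$ by the identity, so there is no genuine obstacle beyond checking well-definedness. The case $i=2$ runs identically using $r_1, K_1, \bar U_1$ in place of $r_2, K_2, \bar U_2$ and the $t=0$ end of the collar as the interface.
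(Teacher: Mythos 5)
Your proposal is correct and is essentially identical to the paper's own proof: the same map $\pi(x,t)=(r_2(x),t)$ on the collar and $\pi=(r_2,10)$ on $\bar U_2$, with the same two-case fiber-diameter check and the same symmetric treatment of $i=2$.
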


\begin{proof}[Proof of Lemma] The situation is the same for $i=1,2$.  We write down the 
proof for $i =1$.

We have to find a continuous map $\pi$ from $X \setminus U_1$ to a 2-dimensional complex $Y$
with small fibers.  Our target $Y$ is $K_2 \times [0, 10]$.  The domain $X \setminus U_1$
is equal to $\Sigma \times [0, 10] \cup U_2$.  We let $(x, t)$ be coordinates on $\Sigma \times [0,10]$.
We define $\pi$ on $\Sigma \times [0,10]$ by $\pi(x,t) = ( r_2(x), t) \in K_2 \times [0,10]$.  
We let $x$ denote the coordinate on $\bar U_2$.  We define $\pi$ on $\bar U_2$ by $\pi(x) = ( r_2 (x), 10)
\in K_2 \times [0,10]$.  Since $\Sigma \times [0,10]$ and $U_2$ are glued together by identifying
$\Sigma \times \{ 10 \}$ with $\partial U_2 = \Sigma$, the map $\pi$ is a continuous map from
$X \setminus U_1$ to $Y = K_2 \times [0,10]$.

Next, we estimate the size of the fiber $\pi^{-1}(k,t)$, where $k \in K_2$ and $t \in [0,10]$.  
The estimate has two cases.  If $t < 10$, then the fiber has the form $(r_2^{-1}(k) \cap \Sigma)
\times \{ t \} \subset \Sigma \times [0,10]$.  So the diameter of $\pi^{-1}(k,t)$ is at most the
diameter of $r_2^{-1}(k) \cap \Sigma$ within $\Sigma$, which is $\lesssim \delta$.

If $t = 10$, then the fiber $\pi^{-1}(k,t)$ has the form $r_2^{-1}(k) \subset \bar U_2$.  So the
fiber has diameter at most the diameter of $r_2^{-1}(k)$ in $\bar U_2$, which is also $\lesssim \delta$. \end{proof}

So our metric $(S^3, g)$ has Uryson 2-width $\gtrsim 1$ and yet every ball of radius 1 in $(S^3, g)$ has
Uryson width $\lesssim \delta$.  By taking $\delta$ small and rescaling the metric a little, we get a metric
$(S^3, g_\epsilon)$ as desired.  \end{proof}

Remark:  We also note that this metric $(S^3, g_\epsilon)$ has volume $\sim 1$ and diameter $\sim 1$.

\section{Open problems}

The Szpilrajn theorem holds very generally for all compact metric spaces.  Our theorem is only proven for
Riemannian manifolds.  But I don't know any counterexample to prevent Theorem \ref{main} from
generalizing to compact metric spaces.

\begin{ques} Suppose that $X$ is a compact metric space.  Suppose that each unit ball of $X$ has
n-dimensional Hausdorff measure $< \epsilon_n$.  If $\epsilon_n$ is chosen sufficiently small, does
this imply that $UW_{n-1}(X) \le 1$?
\end{ques}

In fact, something even more general based on the Hausdorff content looks very plausible.  Recall
that the n-dimensional Hausdorff content of a subset $S$ in a metric space $X$ is the infimum of
$\sum_i r_i^n$ among all coverings of $S$ by countably many balls $B(x_i, r_i)$.  

\begin{ques} Suppose that $X$ is a compact metric space, and that each unit ball in $X$ has
n-dimensional Hausdorff content $< \epsilon_n$.  If $\epsilon_n$ is chosen sufficiently small, does
this imply that $UW_{n-1}(X) \le 1$?
\end{ques}

This result would be stronger than our theorem even for Riemannian manifolds.  In particular, it may
apply to a Riemannian manifold $(X^d, g)$ with dimension $d > n$.  When $d > n$, the n-dimensional
Hausdorff {\it measure} of a unit ball in $X^d$ is always infinite.  But the n-dimensional Hausdorff content
of a unit ball in $X^d$ is always finite, and for some $X$ it can be very small.

There are also open questions related to the funny example in Proposition \ref{counterex}. 

\begin{ques} Suppose that $(M^n, g)$ is a Riemannian manifold so that each unit ball $B \subset M$ has
$UW_q(B) < \epsilon$.  If $\epsilon$ is sufficiently small, does this inequality imply anything about $UW_{q'}(M)$
for some $q' \ge q$?
\end{ques}

\section{Appendix: non-compact manifolds and manifolds with boundary}

Our main theorem also holds for compact Riemannian manifolds with boundary.

\begin{corollary} There exists $\epsilon_n > 0$ so that the following holds.  
If $X$ is a compact Riemannian manifold with boundary, and if there is a radius $R$ so that
every ball of radius $R$ in $X$ has volume at most $\epsilon_n R^n$, then $UW_{n-1}(X) \le R$.
\end{corollary}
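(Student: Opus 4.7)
The plan is a doubling argument. Let $DX := X \cup_{\partial X} X'$ be the double, obtained by gluing two copies of $X$ along the identity on $\partial X$, and give $DX$ the natural doubled Riemannian metric $g_D$; this metric is continuous and Lipschitz but typically only $C^0$ across $\partial X$, and it admits an isometric involution $\tau$ swapping the two copies. Smooth $g_D$ inside an arbitrarily thin collar of $\partial X$ to obtain a genuinely smooth Riemannian metric $g_s$ on the closed manifold $DX$, where the ratio between $g_s$ and $g_D$ on tangent vectors is bounded by a factor $\lambda > 1$ that we can take as close to $1$ as we like.

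The folding map $\pi: (DX, g_D) \to (X, g)$ is a $1$-Lipschitz, at-most-$2$-to-$1$ surjection and a local isometry off $\partial X$, so every ball satisfies
$$\Vol_{g_D}\bigl(B_{DX}(p, R)\bigr) \leq 2 \Vol_g\bigl(B_X(\pi(p), R)\bigr) \leq 2\epsilon_n R^n,$$
and therefore $\Vol_{g_s}(B_{(DX, g_s)}(p, R)) \leq 2\lambda^n \epsilon_n R^n$. If the corollary's constant $\epsilon_n$ is chosen to be, say, one fourth of the constant from Theorem \ref{main}, this quantity is below the threshold of Theorem \ref{main} for all $\lambda$ sufficiently close to $1$. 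Applying Theorem \ref{main} to the closed Riemannian manifold $(DX, g_s)$ yields $UW_{n-1}(DX, g_s) \leq R$.

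To transfer the bound back to $X$, I would use that the inclusion $\iota: (X, g) \hookrightarrow (DX, g_D)$ is an isometric embedding of length metric spaces: any curve joining $x, y \in X$ that wanders into $X'$ crosses $\partial X$ at an even number of points, and applying $\tau$ to its segments in $X'$ produces a curve of equal length lying entirely in $X$, so $d_{g_D}(x, y) = d_g(x, y)$. Hence the inclusion $(X, g) \hookrightarrow (DX, g_s)$ is $\lambda$-bilipschitz, and restricting to $X$ the map witnessing $UW_{n-1}(DX, g_s) \leq R$ gives fibers of diameter $\leq \lambda R$ in $(X, g)$. This shows $UW_{n-1}(X, g) \leq \lambda R$ for every $\lambda > 1$, and so, taking the infimum, $UW_{n-1}(X, g) \leq R$.

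The principal technicality is the smoothing step: since Theorem \ref{main} is stated for smooth Riemannian metrics while $g_D$ has a corner along $\partial X$, we must smooth and absorb the resulting distortion $\lambda$. An appealing alternative would be to verify that Theorem \ref{main} extends to Lipschitz Riemannian manifolds -- its proof uses only covers by metric balls, nerves, partitions of unity, and piecewise smooth maps, all of which are insensitive to such mild non-smoothness -- which would let one apply the theorem directly to $(DX, g_D)$ and recover the bound without any limit argument.
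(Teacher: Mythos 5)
Your proposal is essentially the paper's own proof: double $X$, note that every $R$-ball in $DX$ has volume at most $2\epsilon_n R^n$, apply Theorem \ref{main} to the double, and restrict the resulting map to $X$, using that the intrinsic distance between two points of $X$ is unchanged in $DX$ so fibers keep small diameter. The only difference is your smoothing of the doubled metric and the $\lambda \to 1$ limit, a technical point the paper's proof passes over silently (it simply treats $DX$ as a closed Riemannian manifold); this is a refinement of the same argument rather than a different route, and it is correct.
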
 

\begin{proof} Let $DX$ denote the double of $X$, which is a closed Riemannian manifold.  If $x \in X$, then the ball in $DX$ centered at $x$ of radius $R$ is contained in the double of the ball in $X$ centered at $x$ of radius $R$.  Therefore, every ball of radius $R$ in $DX$ has volume at most $2 \epsilon_n R^n$.  If $\epsilon_n$ is sufficiently small, then Theorem \ref{main} implies that $UW_{n-1}(DX) \le R$.  In other words, there is an (n-1)-dimensional polyhedron $Y$ and a map $\pi: DX \rightarrow Y$ so that each fiber $\pi^{-1}(y)$ has diameter at most $R$ in $DX$.  We restrict $\pi$ to a map $X \rightarrow Y$.   Finally, we note that for two points $x_1, x_2 \in X$, the distance from $x_1$ to $x_2$ in $X$ is equal to the distance in $DX$.  Therefore, the diameter of each fiber $\pi^{-1}(y)$ in $X$ is at most $R$, and we conclude that $UW_{n-1}(X) \le R$.  
\end{proof}

Our main theorem also extends to complete Riemannian manifolds in the following sense: 

\begin{thm} There exists $\epsilon_n > 0$ so that the following holds.  
If $(M^n, g)$ is a complete Riemannian manifold, and if there is a radius $R$ so that
every ball of radius $R$ in $(M^n, g)$ has volume at most $\epsilon_n R^n$, then there is a continuous map $\pi: M \rightarrow Y$ for an infinite $(n-1)$-dimensional complex $Y$ so that each fiber $\pi^{-1}(y)$ has diameter at most $R$ in $(M^n, g)$.  
\end{thm}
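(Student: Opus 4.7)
The plan is to follow the proof of Theorem \ref{main} essentially verbatim, replacing the finite cover of the closed manifold by a locally finite cover of the complete manifold. After rescaling the metric by $R^{-1}$ we may assume $R=1$, so the hypothesis becomes that every unit ball has volume at most $\epsilon_n$. Since $(M,g)$ is complete, closed metric balls are compact, and so all of the local geometric arguments used in Section 1 and in \cite{Gu2} have their usual meaning.

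First I would build a locally finite open cover $\{B_i\}$ of $M$ by balls of radius $<1/100$, such that $\{\frac12 B_i\}$ still covers $M$, and such that the analogue of Lemma \ref{Gu2lemma} (Lemma 5 of \cite{Gu2}) holds: namely, there is a piecewise smooth map $\phi^{(D)}\colon M\to N$ subordinate to the cover, where $N$ is the rectangular nerve of $\{B_i\}$, and $\phi^{(D)}$ satisfies the face-by-face volume estimate (\ref{volestD}). The construction in \cite{Gu2} is already local in character — the cover is obtained from a Vitali/packing procedure applied to balls, and the map $\phi^{(D)}$ is assembled from a partition of unity subordinate to $\{B_i\}$ — so extending it to the complete case only requires running the local construction around each point and using properness to make the cover locally finite. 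Because the multiplicity of the resulting cover is bounded by a constant $D=D(n)$, the nerve $N$ is a locally finite polyhedron of dimension at most $D$, even though $N$ itself is infinite.

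Second I would carry out the sequence of homotopies $\phi^{(D)} \sim \phi^{(D-1)} \sim \cdots \sim \phi^{(n-1)}$ exactly as in Section 1, face by face on $N$. Each application of Lemma \ref{pushoutlemma} and of the retraction $R_{\delta(k)}$ is supported in the preimage of a single closed face of $N^{(k)}$; since the cover is locally finite, only finitely many faces meet any compact subset of $N$, and the per-face modifications therefore patch together unambiguously into a global, continuous homotopy on $M$. The inductive volume estimate $(**)$ is itself face-by-face, so the argument that $\Vol_n[\phi^{(k-1)}(M)\cap \Star(F)]<2C\epsilon r_1(F)^n e^{-\beta d(F)}$ goes through unchanged, and one chooses the same single dimensional constant $\epsilon_n$ to close the telescoping product. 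The resulting map $\pi:=\phi^{(n-1)}\colon M\to Y:=N^{(n-1)}$ is continuous, lands in the locally finite $(n-1)$-dimensional polyhedron $Y$, and is subordinate to $\{B_i\}$; hence every fiber lies in a single ball $B_i$ of radius $<1/100$ and has diameter at most $1$ (which becomes $R$ after undoing the rescaling).

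The main obstacle I anticipate is verifying carefully that the covering-plus-initial-map construction of Lemma \ref{Gu2lemma} really does extend to complete manifolds with the same constants $C(n)$ and $\beta(n)$, and that the resulting infinite cover has the multiplicity bound needed to keep $N$ of bounded dimension. Once this local-to-global upgrade of Lemma 5 of \cite{Gu2} is in hand, every other step — the push-outs through the skeleta, the volume estimates on each $\Star(F)$, and the concluding bound on fiber diameters — is identical to the closed case and involves no genuinely new ingredient.
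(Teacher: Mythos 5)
There is a genuine gap, and it is exactly the point where your plan diverges from what is actually available: you assert that the cover can be chosen with multiplicity bounded by a dimensional constant $D=D(n)$, so that the nerve $N$ has dimension at most $D$ and the downward induction $\phi^{(D)}\sim\phi^{(D-1)}\sim\cdots\sim\phi^{(n-1)}$ can be run ``exactly as in Section 1.'' No such multiplicity bound holds for the covers produced by \cite{Gu2}, and none is needed in the closed case: there $D$ is finite simply because there are finitely many balls, but it is not controlled in terms of $n$ -- this is precisely why the estimate (\ref{volestD}) carries the factor $e^{-\beta d(F)}$ and why the closing step of the proof of Theorem \ref{main} needs the convergent infinite product $\prod_{l=n}^{\infty}[1-2\delta(l)]^{-n}$ over all dimensions $l\ge n$. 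On a complete noncompact manifold the situation is worse: the cover is locally finite, but the multiplicity may tend to infinity along a sequence of points going to infinity, so $N$ is only locally finite-dimensional, with faces of unbounded dimension. There is then no top skeleton $N^{(D)}$ at which to start your induction, and the scheme as you describe it never gets off the ground.

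The correct route (and the one the paper takes) is to abandon the downward induction from a top dimension and instead construct, for every $k\ge n-1$, a map $\phi_k:M\rightarrow N^{(k)}$ directly. In a region of $N$ whose local dimension is already less than $k$, one defines $\phi_k$ as the infinite composition $R_{\delta(k+1)}\circ R_{\delta(k+2)}\circ\cdots$ applied to $\phi$; this limit exists and the partial compositions converge uniformly on compact sets because $\delta(l)=[3C\sigma_n^{-1}\epsilon e^{-\beta l}]^{1/n}$ decays exponentially in $l$ and each face of $N$ meets only finitely many others. In a region where the local dimension of $N$ is at least $k$, one obtains $\phi_k$ from $\phi_{k+1}$ by the pushout step (Lemma \ref{pushoutlemma} followed by $R_{\delta(k+1)}$), just as in the closed case. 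With this modification the face-by-face volume estimate $(**)$ and the subordination to the cover are preserved for every $\phi_k$, and the rest of your argument -- the choice of $\epsilon_n$ to make the product less than $2$, the application of Lemma \ref{easypush} at dimension $n$, and the diameter bound on fibers of $\pi=\phi_{n-1}$ -- does go through as you describe. So the local-finiteness and patching points you raise are fine; what must be repaired is the claim of bounded multiplicity and the resulting finite-dimensional induction, which should be replaced by the limit construction above.
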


The proof is essentially the same as the proof for closed manifolds.  The main tricky issue is that the rectangular nerve $N$ is not finite-dimensional but only locally finite dimensional.  The cover by balls $B_i$ will be locally finite, but the multiplicity of the cover may go to infinity at a sequence of points going to infinity in $M$.  (See Section 6 of \cite{Gu2} for an explanation of how to choose the balls $B_i$ on a complete Riemannian manifold.)  Therefore, each face of $N$ is finite dimensional, and each face of $N$ is adjacent to only finitely many other faces of $N$, but the dimension of the faces may be unbounded.  

In the proof of Theorem \ref{main}, we began with a map $\phi = \phi^{(D)}: M \rightarrow N$, and
we built a sequence of homotopic maps
$\phi = \phi_D \sim \phi_{D-1} \sim ... \sim
\phi_{n-1}$, where $D$ was the dimension of $N$ and $\phi_k$ maps $M$ to the
k-skeleton of $N$.  In general, the dimension of $N$ is not
finite, but is only locally finite, and we must proceed a little
differently.  Instead, we construct an infinite sequence of
maps $\phi_k: M \rightarrow N$ where each $\phi_k$ maps $M$ to the
k-skeleton of $N$ and each $\phi_k$ is subordinate to our cover.  

In a region of $N$ where the dimension is less than $k$, we
define $\phi_k$ to be the infinite composition $R_{\delta(k+1)}
\circ R_{\delta(k+2)} \circ ... $ applied to $\phi$.  (This infinite
composition is defined to be the limit of the maps
$R_{\delta(k+1)} \circ ... \circ R_{\delta(N)}$ as $N$ goes to
infinity.  The sequence of maps converges uniformly on compact
sets.)  In a region where the dimension of $N$ is at least $k$,
we define $\phi_k$ from $\phi_{k+1}$ as in the proof of Theorem \ref{main}.
All the maps $\phi_k$ are subordinate to the cover, and the volume bounds work in the same way
as in the proof of Theorem \ref{main}.

\end{document}